\setlist[enumerate]{leftmargin=.5in}
\setlist[itemize]{leftmargin=.5in}
\newcommand{\id}{\mathrm d}
\newcommand{\vc}{\mathbf}
\newcommand{\pard}[2]{\frac{\partial #1}{\partial #2}}
\DeclarePairedDelimiter\ceil{\lceil}{\rceil}
\newtheorem{thm}{Theorem}
\newtheorem{prop}{Proposition}
\newtheorem{rem}{Remark}
\newtheorem{lem}{Lemma}
\title{Multiscale analysis of accelerated gradient methods\thanks{Accepted for publication in SIAM Journal on Optimization}}
\author{Mohammad Farazmand\thanks{Department of Mathematics,
North Carolina State University,
2311 Stinson Dr., Raleigh, NC 27695-8205, USA. Tel: +1 919-515-2598. Email:\email{farazmand@ncsu.edu}.}
}
\begin{document}
\maketitle

\begin{abstract}
Accelerated gradient descent iterations are widely used in optimization.
It is known that, in the continuous-time limit, these iterations 
converge to a second-order differential equation which we refer to as the accelerated gradient flow.
Using geometric singular perturbation theory, we show 
that, under certain conditions, the accelerated gradient flow possesses an attracting
invariant slow manifold to which the trajectories of the flow 
converge asymptotically. We obtain a general explicit expression in the form of functional series expansions
that approximates the slow manifold to any arbitrary order of accuracy.
To the leading order, the accelerated gradient flow
reduced to this slow manifold coincides with the usual gradient descent.
We illustrate the implications of our results on three examples.
\end{abstract}

\section{Introduction}
We consider the convex optimization problem $\min_{\vc x\in\mathcal X} f(\vc x)$ where $f:\mathcal X\to \mathbb R$
is a convex function of class $C^r$ (i.e. $f$ is $r$-time continuously differentiable) and
$\mathcal X\subseteq \mathbb R^n$ is a compact convex set. We also assume 
that $f$ attains its unique minimum at a point $\vc x^\ast \in\mathcal X$.

The most well-known method for obtaining the minimum $\vc x^\ast$ is 
the gradient (or steepest) descent iterations
\begin{equation}
\vc x_{k+1}=\vc x_k - s \nabla f(\vc x_k),
\label{eq:gd}
\end{equation}
where $s>0$ is a parameter. Under certain assumptions, the sequence $\{\vc x_k\}$ is guaranteed to
converge to the minimum $\vc x^\ast$~\cite{boyd2004}. It is also well-known that in the limit $s\to 0$, 
the discrete iterations~\eqref{eq:gd} converge to the continuous-time gradient flow $\dot{\vc x} = -\nabla f(\vc x)$
which is a first-order ordinary differential equation (ODE). In fact, the gradient descent iterations
are an explicit Euler discretization of the gradient flow~\cite{smale1981,faraz_adjoint}.

The convergence rate of the gradient descent iterations is $\mathcal O(1/k)$ which is rather slow~\cite{nesterov2013}. 
In order to improve this convergence rate, a number of accelerated gradient descent iterations have been 
developed~\cite{polyak1964}.
Most notable perhaps is Nesterov's accelerated gradient descent,
\begin{equation}
\vc x_{k+1}=\vc x_k+\lambda_k(\vc x_k-\vc x_{k-1}) - s_k \nabla f(\vc x_k+\lambda_k(\vc x_k-\vc x_{k-1})),
\label{eq:nestrov}
\end{equation}
%\begin{subequations}
%\begin{equation}
%\vc x_k = \vc y_{k-1}-s\nabla f(\vc y_{k-1}),
%\end{equation}
%\begin{equation}
%\vc y_k=\vc x_k +\frac{k-1}{k+2}\left( \vc x_k-\vc x_{k-1}\right),
%\end{equation}
%\end{subequations}
%which requires the intermediate iterations for computing $\vc y_k$. 
where $\{\lambda_k\}$ and $\{s_k\}$ are prescribed sequences of positive real numbers~\cite{nesterov1983}.
The convergence rate of this accelerated gradient descent is $\mathcal O(1/k^2)$.
The terms involving $\lambda_k$ are referred to as the acceleration terms. 
In practice, $\lambda_k$ and $s_k$ are sometimes assumed to be constants independent of $k$.
For $\lambda_k=0$, we recover the steepest descent iterations~\eqref{eq:gd}.

Su et al.~\cite{su2014} discovered that, in a small step size limit, the Nesterov iterations
converge to the second-order differential equation,
\begin{equation}
\ddot{\vc x}+\frac{\rho}{t}\dot{\vc x}+\nabla f(\vc x)=0,
\label{eq:nesterov_cont}
\end{equation}
where $\rho =3$. We refer to this differential equation as the \emph{Nesterov flow}.
Recall that the gradient descent iterations converge, in the continuous-time limit, to the
autonomous first-order differential equation $\dot{\vc x} = -\nabla f(\vc x)$
while the Nesterov's accelerated gradient descent converges to the
non-autonomous second-order differential equation~\eqref{eq:nesterov_cont}
(It is non-autonomous because of the explicit dependent of 
the coefficient $\rho/t$ on time $t$).

More recently, Wibisono et al.~\cite{wibisono2016} showed that a large class of accelerated gradient methods 
can be formulated as temporal discretizations of a second-order differential equation.
In their framework, the continuous-time accelerated gradient methods coincide with the Euler--Lagrange equations corresponding 
to a single Lagrangian, 
\begin{equation}
\mathcal L(\vc x,\dot{\vc x},t)=e^{\alpha_t+\gamma_t}\left( D_h(\vc x+e^{-\alpha_t}\dot{\vc x},\vc x)-e^{\beta_t}f(\vc x)\right),
\label{eq:BregmanLag}
\end{equation}
which we refer to as the \emph{Bregman Lagrangian}.
Here, $\alpha_t$, $\beta_t$ and $\gamma_t$ are potentially time-dependent scalar functions and 
the Bregman divergence $D_h$ is defined as
\begin{equation}\label{eq:bregman_div}
D_h(\vc y,\vc x)=h(\vc y)-h(\vc x)-\langle \nabla h(\vc x),\vc y-\vc x\rangle,
\end{equation}
where $\langle\cdot,\cdot\rangle$ denotes the usual Euclidean inner product and
the \emph{distance-generating function} $h:\mathcal X\to \mathbb R$ is convex and continuously differentiable.

Wibisono et al.~\cite{wibisono2016} assume the so-called ideal scaling, $\dot{\gamma}_t=e^{\alpha_t}$, 
and show that the Euler--Lagrange equation corresponding to
the Bregman Lagrangian~\eqref{eq:BregmanLag} reads
\begin{equation}
\ddot{\vc  x} +\left(e^{\alpha_t}-\dot{\alpha}_t\right) \dot{\vc  x} + e^{2\alpha_t+\beta_t}
\left[ \nabla^2 h(\vc x+e^{-\alpha_t}\dot{\vc x})\right]^{-1} \nabla f(\vc x) = 0. 
\label{eq:EL}
\end{equation}
For the special choice $\alpha_t = \log((\rho-1)/t)$, $\beta_t = -2\log((\rho-1)/t)$ and
$h(\vc x)=\frac12 \|\vc x\|^2$, the Euler--Lagrange equation~\eqref{eq:EL} coincides with 
equation~\eqref{eq:nesterov_cont}.

Wibisono et al.~\cite{wibisono2016} also show that, for certain functions $\alpha_t$ and $\beta_t$,
the trajectories of~\eqref{eq:EL} converge to the minimum $\vc x^\ast$ asymptotically. Furthermore, they recover 
several well-known accelerated gradient methods by discretizing equation~\eqref{eq:EL}
in time in an appropriate fashion.

In this paper, we investigate the multiscale behavior of the accelerated gradient flows
and their phase space structure using geometric singular perturbation theory.
We divide our analysis into two parts: autonomous and non-autonomous cases. 
In the autonomous case (Section~\ref{sec:slowManRed_aut}), the choice of the functions $\alpha_t$ and $\beta_t$ 
is restricted such that
the Euler--Lagrange equation~\eqref{eq:EL} does 
not have an explicit dependence on time. 
In this case, we show that the solutions of 
the Euler--Lagrange equation~\eqref{eq:EL} converge exponentially fast towards an 
$n$-dimensional, attracting, slow manifold embedded in the $2n$-dimensional phase space 
of the Euler--Lagrange equation. We derive explicit formulas for the slow manifold
in terms of a functional series expansion. 
To the leading order, the Euler--Lagrange equation, reduced to the slow manifold, 
coincides with the usual gradient descent. 
We also investigate the reduced flow at higher orders and prove 
that the minimum $\vc x^\ast$ is a locally asymptotic stable fixed point of the reduced flow
at any order.

The Nesterov accelerated gradient flow~\eqref{eq:nesterov_cont}
is non-autonomous because of the explicit time dependence of the 
coefficient $\rho/t$. We treat this non-autonomous case separately (Section~\ref{sec:slowManRed_nonaut}) using a non-autonomous extension
of the singular perturbation theory. While the results are similar to the autonomous 
accelerated gradient flow, here the slow manifold is an $(n+1)$-dimensional
invariant manifold in the $(2n+1)$-dimensional \emph{extended} phase space of the flow.

The paper is organized as follows.
Section~\ref{sec:slowManRed_aut} contains our main results including the
slow manifold reduction in Euclidean and non-Euclidean spaces
in the autonomous case. Section~\ref{sec:slowManRed_nonaut}
treats the non-autonomous case of equation~\eqref{eq:nesterov_cont}.
In section~\ref{sec:examples}, 
we demonstrate the implications of our results on three examples. Section~\ref{sec:conc} contains
our concluding remarks.

\section{Multiscale analysis -- Autonomous case}\label{sec:slowManRed_aut}
\subsection{Euclidean case}
In this section, we consider the Euclidean case where the distance-generating
function in the Bregman Lagrangian~\eqref{eq:BregmanLag} is given by $h(\vc x)=\frac12 \|\vc x\|^2$. In this case, 
the Lagrangian reads
\begin{equation}
\mathcal L(\vc x,\dot{\vc  x},t)=e^{\alpha_t+\gamma_t}\left( \frac12 \|e^{-\alpha_t}\dot{\vc x}\|^2-e^{\beta_t}f(\vc x)\right).
\end{equation}
The corresponding Euler--Lagrange equation reads
\begin{equation}
\ddot{\vc x} +\left(e^{\alpha_t}-\dot{\alpha}_t\right) \dot{\vc x} + e^{2\alpha_t+\beta_t}\nabla f(\vc x) = 0. 
\label{eq:EL_eucl}
\end{equation}

The following theorem, which constitutes our main result, states that under certain assumptions the trajectories of the Euler--Lagrange 
equation~\eqref{eq:EL_eucl} converge asymptotically to an $n$-dimensional invariant slow manifold embedded in the $2n$-dimensional phase space
$(\vc x,\dot{\vc x})$. This slow manifold is a graph over the $\vc x$ coordinates. We obtain an explicit expression for this graph 
in the form of a functional series expansion. 
Our results use Fenichel's geometric singular perturbation theory~\cite{fenichel1979geometric}.
There are various statements of this theory with different extents of generality~\cite{wiggins1994normally,jones1995,Kuehn2015}. 
A statement of the geometric singular perturbation theory that most closely 
fits our purposes can be found in Chapter 3 (and Theorem 3.1.4.) of~\cite{Kuehn2015}.

\begin{thm}\label{thm:slowFlow_eucl}
Let $f\in C^{r+1}(\mathcal X)$ with $r\geq 0$.
Define $\mu =e^{\alpha_t}-\dot{\alpha}_t$ and $\eta = e^{2\alpha_t+\beta_t}$. If $\mu$ and 
$\eta$ are constant, then there exists $\mu_0>0$ such that for all $\mu >\mu_0$ the following are true.
\begin{enumerate}[(i)]
\item The trajectories of the Euler--Lagrange equation~\eqref{eq:EL_eucl}
converge exponentially fast to an $n$-dimensional invariant manifold embedded 
in the $2n$-dimensional phase space $(\vc x,\dot{\vc  x})$.
Furthermore, this invariant manifold is a graph over the $\vc x$ coordinates 
(See figure~\ref{fig:schem_gsp} for an illustration).

\item The flow of~\eqref{eq:EL_eucl} on the invariant manifold $\mathcal M_\epsilon$ is given by
\begin{equation}\label{eq:reducedFlow}
\dot{\vc  x} = \sum_{k=0}^{r}\epsilon^{2k+1}\vc g_{2k+1}(\vc x)+\mathcal O(\epsilon^{2r+3}),
\end{equation}
where $\epsilon=\mu^{-1}$ and the maps $\vc g_k:\mathcal X\to\mathbb R^n$ are defined recursively by
\begin{align}
\vc g_1(\vc x) &= -\eta \nabla f(\vc x),\nonumber\\
\vc g_{2k}(\vc x) &=\vc  0, \nonumber\\
\vc g_{2k+1}(\vc x) &= -\sum_{\ell=1}^{2k-1}\nabla\vc  g_\ell(\vc x)\vc g_{2k-\ell}(\vc x),\quad k\geq 1.
\label{eq:gk}
\end{align}
\end{enumerate}
\end{thm}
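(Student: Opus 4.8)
The plan is to recast the second-order equation~\eqref{eq:EL_eucl} as a first-order system in fast--slow form and then apply Fenichel's theorem. With $\mu$ and $\eta$ constant, writing $\vc y = \dot{\vc x}$ gives $\dot{\vc x} = \vc y$, $\dot{\vc y} = -\mu \vc y - \eta\nabla f(\vc x)$. Setting $\epsilon = 1/\mu$ and rescaling the velocity by $\vc v = \vc y/\epsilon$ (equivalently introducing $\vc v = \mu\dot{\vc x}$), the system becomes $\dot{\vc x} = \epsilon \vc v$, $\epsilon\dot{\vc v} = -\vc v - \eta\nabla f(\vc x)$, which is exactly the standard slow--fast form with slow variable $\vc x$ and fast variable $\vc v$. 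The critical manifold $\mathcal M_0$ is given by $\vc v = -\eta\nabla f(\vc x)$, i.e. $\mathcal M_0 = \{(\vc x,\vc v): \vc v = \vc g_1(\vc x)/\text{(something)}\}$; in the original $(\vc x,\dot{\vc x})$ coordinates this is the graph $\dot{\vc x} = -\epsilon\eta\nabla f(\vc x) = \epsilon \vc g_1(\vc x)$ to leading order. The linearization of the fast equation transverse to $\mathcal M_0$ is $\partial_{\vc v}(-\vc v - \eta\nabla f) = -I$, whose eigenvalues are all $-1$, hence the critical manifold is uniformly normally hyperbolic and attracting, independently of $\vc x$. Since $f\in C^{r+1}$, the vector field is $C^r$ in $\vc x$, so Fenichel's theorem (in the form of Theorem 3.1.4 of~\cite{Kuehn2015}) yields, for $\epsilon$ small enough (equivalently $\mu > \mu_0$ for some $\mu_0>0$), a locally invariant $C^r$ slow manifold $\mathcal M_\epsilon$, $O(\epsilon)$-close to $\mathcal M_0$, a graph over $\vc x$, with trajectories attracted to it exponentially fast at rate $\sim 1/\epsilon$. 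This proves part~(i).

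For part~(ii), I would compute the slow flow on $\mathcal M_\epsilon$ by the standard invariance (asymptotic series) argument. Seek the slow manifold as a graph $\dot{\vc x} = \vc m(\vc x;\epsilon)$ with $\vc m(\vc x;\epsilon) = \sum_{k\geq 1}\epsilon^k \vc h_k(\vc x)$. The invariance condition is obtained by differentiating the graph relation: $\ddot{\vc x} = \nabla\vc m(\vc x;\epsilon)\,\dot{\vc x} = \nabla\vc m(\vc x;\epsilon)\,\vc m(\vc x;\epsilon)$, and substituting into~\eqref{eq:EL_eucl} written as $\ddot{\vc x} = -\mu\dot{\vc x} - \eta\nabla f(\vc x) = -\epsilon^{-1}\vc m - \eta\nabla f$. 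This gives the functional equation
\begin{equation}
\epsilon\,\nabla\vc m(\vc x;\epsilon)\,\vc m(\vc x;\epsilon) = -\vc m(\vc x;\epsilon) - \epsilon\eta\nabla f(\vc x).
\label{eq:invariance_plan}
\end{equation}
Now I would substitute the series, collect powers of $\epsilon$, and solve order by order. At order $\epsilon^1$: $\vc h_1 = -\eta\nabla f = \vc g_1$. At order $\epsilon^{k+1}$ for $k\geq 1$: $\vc h_{k+1} = -\sum_{\ell=1}^{k-1}\nabla\vc h_\ell\,\vc h_{k-\ell}$ (with $\vc h_2$ forced to $\vc 0$ because the only sum would be empty), which is precisely the recursion~\eqref{eq:gk} after identifying $\vc h_k = \vc g_k$; in particular all even-index terms vanish by an easy induction. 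Matching $\epsilon^{2k+1}\vc g_{2k+1}$ against~\eqref{eq:reducedFlow} requires only a reindexing. The truncation error $\mathcal O(\epsilon^{2r+3})$ follows because the series can be carried out consistently to order $r$ (the available smoothness of $f$ caps the number of derivatives of $\vc g_1$ we may take), and Fenichel's theorem guarantees $\mathcal M_\epsilon$ is $C^r$ and $O(\epsilon^{r+1})$-accurately approximated by any truncation of its Taylor-in-$\epsilon$ expansion.

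The main obstacle is not any single hard estimate but rather the bookkeeping of the asymptotic expansion and the careful tracking of how the finite differentiability $f\in C^{r+1}$ limits the order to which the slow manifold can be expanded — i.e., justifying that exactly $r$ terms (plus the leading one) are well-defined $C^1$ maps and that the remainder is genuinely $\mathcal O(\epsilon^{2r+3})$ rather than merely $o(\epsilon^{2r+1})$. A secondary technical point is the rescaling step: one must check that the change of variables $\vc v = \mu\dot{\vc x}$ puts the system into a form to which the cited version of Fenichel's theorem literally applies (compact overflowing/slow manifold, parametrization over a compact slow domain $\mathcal X$), and that "constant $\mu,\eta$" is exactly the condition making the system autonomous so that the standard (rather than non-autonomous) theory suffices. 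Once these are in place, the recursion and the vanishing of even-order terms are routine.
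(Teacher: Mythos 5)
Your proposal is correct and follows essentially the same route as the paper: convert~\eqref{eq:EL_eucl} to a fast--slow first-order system, invoke Fenichel's theorem for existence, normal attractivity, and $C^r$-smoothness of the slow manifold, and derive the recursion~\eqref{eq:gk} by matching powers of $\epsilon$ in the invariance relation. The only cosmetic difference is that you rescale the velocity ($\vc v = \mu\dot{\vc x}$), making the critical manifold the nontrivial graph $\vc v = -\eta\nabla f(\vc x)$, whereas the paper keeps $\vc v = \dot{\vc x}$ and rescales time ($\tau = \mu t$) so that its critical manifold is $\{\vc v = \vc 0\}$; both encode the same fast--slow structure and yield the identical expansion, and your flagging of the $\mathcal O(\epsilon^{2r+3})$ remainder estimate under only $C^{r+1}$ smoothness as the delicate point is precisely what the paper treats most informally (via the observation that $\vc g_{2r+1}$ requires derivatives of $f$ up to order $r+1$ and that the next nonvanishing coefficient would need one more).
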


\begin{figure}
\centering
\includegraphics[width=.95\textwidth]{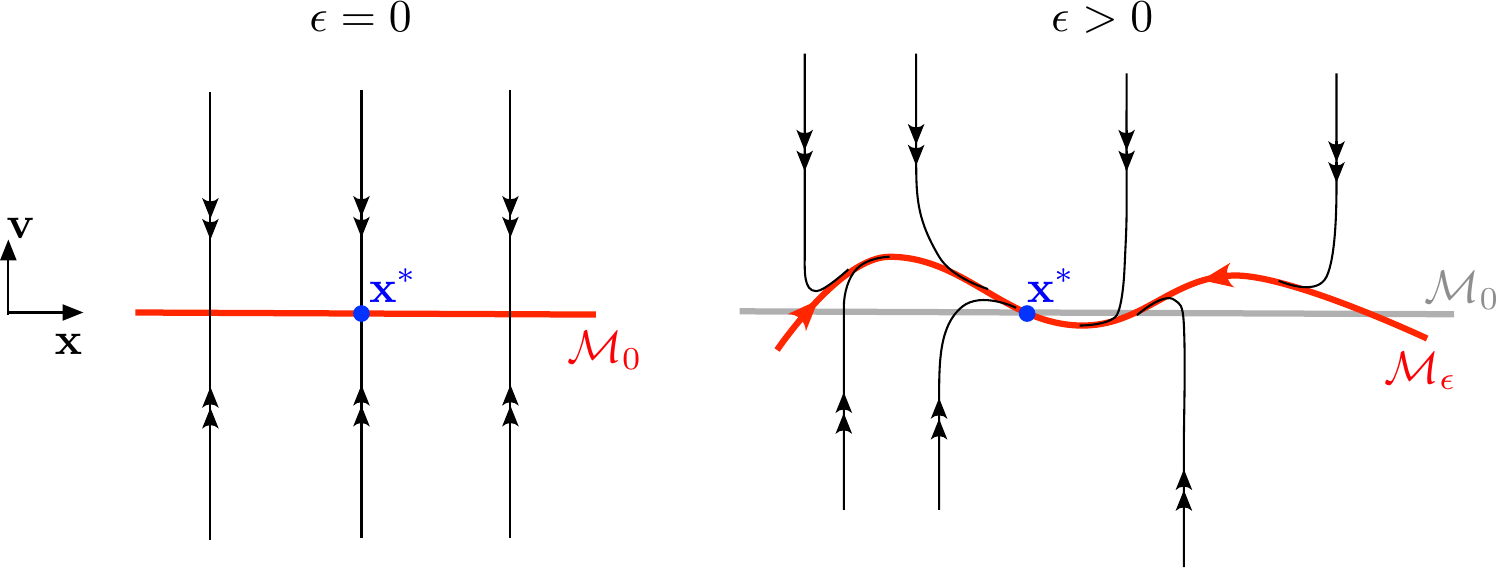}
\caption{A sketch of the geometric singular perturbation theory.
In the singular limit, $\epsilon=0$, the 
set $\mathcal M_0=\{(\vc x,\vc v)\in\mathbb R^{2n}:\ \vc v=0\}$ is filled with fixed points. The trajectories
$(\vc x(\tau),\vc v(\tau))=(\vc x_0,\vc v_0e^{-\tau})$ converge exponentially fast to $(\vc x_0,\vc 0)\in\mathcal M_0$ 
from any initial condition $(\vc x_0,\vc v_0)$. In other words, the set $\mathcal M_0$ is 
invariant and the global attractor of the system. For $0<\epsilon\ll 1$, 
the manifold $\mathcal M_0$ deforms into a nearby manifold $\mathcal M_\epsilon$
which is also invariant and globally attracting. However, $\mathcal M_\epsilon$ 
is not necessarily a collection of fixed points. 
}
\label{fig:schem_gsp}
\end{figure}

Before presenting the proof of this theorem, we make a few remarks.
\begin{rem}
The conditions requiring $\mu  = e^{\alpha_t}-\dot{\alpha}_t$ and $\eta = e^{2\alpha_t+\beta_t}$
to be constant are equivalent to
\begin{equation}
\alpha_t = \log \left[ \frac{\mu}{1+\left(\mu e^{-\alpha_0}-1\right) e^{\mu t}}\right],
\quad\beta_t = \log\eta -2\alpha_t, 
\label{eq:alphabeta}
\end{equation}
where $\alpha_0,\eta\in\mathbb R$ and $\mu>0$ are arbitrary constants satisfying $\alpha_0\leq \log \mu$.
For instance, the class of accelerated gradient flows considered in Eq. (7) of Ref.~\cite{wilson2016} 
and in Eq. (4) of Ref.~\cite{jin18a} satisfy these conditions.
We point out that these conditions are sufficient for the results of Theorem~\ref{thm:slowFlow_eucl} 
to hold but are not necessary and can possibly be relaxed.
\label{rem:alphabeta}
\end{rem}

\begin{rem}
Note that the minimum $\vc x^\ast$ is a fixed point of~\eqref{eq:reducedFlow} since
$\nabla f(\vc x^\ast)=\vc g_k(\vc x^\ast)=\vc 0$ for all $k\in\mathbb N$.
\end{rem}

\begin{rem}
Polyak's heavy ball iterations are a particular discretization of the ODE~\eqref{eq:EL}
with constant $\mu =e^{\alpha_t}-\dot{\alpha}_t$ and $\eta = e^{2\alpha_t+\beta_t}$~\cite{polyak1964}. 
The convergence of the Polyak's heavy ball iterations is only guaranteed if $f$ is strongly convex~\cite{lessard2016}. 
However, the fixed point $(\vc x,\dot{\vc x})=(\vc x^\ast,\vc 0)$ is a globally asymptotically stable fixed point of the ODE~\eqref{eq:EL_eucl}
for all $\mu>0$ and $C^1$ convex functions $f$.
One can verify this by considering the Lyapunov function
\begin{equation}
\mathcal E(\vc x,\dot{\vc x}) = \eta \left[ f(\vc x)-f(\vc x^\ast)\right] + \frac12 |\dot{\vc x}|^2.
\end{equation}
Note that $\mathcal E(\vc x,\dot{\vc x})>0$ for all $(\vc x,\dot{\vc x})\in\mathcal X\times \mathbb R^n\backslash (\vc x^\ast,\vc 0)$ and 
$\frac{\id}{\id t}\mathcal E = - \mu |\dot{\vc x}|^2\leq 0$. By LaSalle's invariant set theorem~\cite{LaSalle1960}, and the fact that $\mathcal E(\vc x,\dot{\vc x})\to \infty$
as $|(\vc x,\dot{\vc x})|\to\infty$, the fixed point $(\vc x^\ast, \vc 0)$ is globally asymptotically stable.
The fact that Polyak's iterations fail to converge for certain convex functions
is an artifact of the discretization.
\end{rem}

\begin{rem}\label{rem:gradient}
To the leading order (neglecting $\mathcal O(\epsilon^3)$ terms), equation~\eqref{eq:reducedFlow}
reduces to the usual gradient flow
\begin{equation}
\dot{\vc  x} = -\epsilon\eta \nabla f(\vc x),
\end{equation}
with a rescaling of time with the constant $\epsilon\eta$. This connection to the gradient flow
was already speculated in an appendix of Ref.~\cite{wibisono2016} through a  heuristic 
argument.
\end{rem}

\begin{proof}[Proof of Theorem~\ref{thm:slowFlow_eucl}]
We first rewrite equation~\eqref{eq:EL_eucl} as a system of first-order differential equations by introducing a new variable $\vc v =\dot{\vc  x}$,
so that
\begin{align}
\dot{\vc x} = & \vc v,\nonumber\\
\dot{\vc v}= & -\mu \vc v -\eta \nabla f(\vc x). 
\label{eq:EL_1st}
\end{align}
Next, we introduce the fast time $\tau =t/\epsilon$ where $\epsilon=\mu^{-1}\ll 1$.
In terms of the fast time $\tau$, equations~\eqref{eq:EL_1st} can be written as 
\begin{align}
\vc x' = & \epsilon \vc v,\nonumber\\
\vc v' = & - \vc v -\epsilon\eta \nabla f(\vc x),
\label{eq:EL_fast}
\end{align}
where prime denotes the derivative with respect to the fast time $\tau$, e.g., $\vc x' =\id \vc x/\id \tau$.

In the singular limit, $\epsilon=0$, equations~\eqref{eq:EL_fast} have the trivial solution
$\vc x(\tau)=\vc x_0$ and $\vc v(\tau)=\vc v_0e^{-\tau}$ where $(\vc x_0,\vc v_0)$ is the initial condition.
The trajectories of the system
converge exponentially fast towards the \emph{critical manifold}
\begin{equation}
\mathcal M_0 = \left\{(\vc x,\vc v)\in\mathcal X\times \mathbb R^{n} :\ \vc v=\vc 0\right\}. 
\end{equation}
By Fenichel's geometric singular perturbation theory~\cite{fenichel1979geometric}, 
there exists $\epsilon_0>0$ such that
for $0<\epsilon<\epsilon_0$,  (i) The trajectories of~\eqref{eq:EL_fast} also converge 
to an $n$-dimensional invariant manifold $\mathcal M_\epsilon$, (ii) Furthermore, $\mathcal M_\epsilon$
is a $C^r$-smooth graph over $\mathcal M_0$ and $\mathcal O(\epsilon)$ close to it. 
More specifically, $\mathcal M_\epsilon$ can be expressed through
the formal series expansion
\begin{equation}
\mathcal M_\epsilon = \left\{(\vc x,\vc v)\in\mathcal X\times \mathbb R^{n} :\ \vc v=\sum_{k=1}^\infty \epsilon^k \vc g_k(\vc x)
\right\},
\label{eq:Meps_formal_series}
\end{equation}
where $\vc g_k:\mathbb R^n\to\mathbb R^n$ are smooth functions to be determined.
As a result, we have 
\begin{align}
\vc v' &= \sum_{k=1}^\infty\epsilon^k \nabla \vc g_k(\vc x)\vc x' \nonumber\\
&= \sum_{k=1}^\infty\epsilon^{k+1} \nabla \vc g_k(\vc x) \vc v\nonumber\\
&= \sum_{k=1}^\infty\sum_{j=1}^\infty\epsilon^{k+j+1} \nabla \vc g_k(\vc x)\vc g_j(\vc x)
\label{eq:vp_1}
\end{align}
On the other hand, equation~\eqref{eq:EL_fast} implies
\begin{align}
\vc v'  &= - \vc v -\epsilon\eta \nabla f(\vc x)\nonumber\\
    &=  -\epsilon \left[ \vc g_1(\vc x) +  \eta \nabla f(\vc x)\right] -\sum_{k=2}^\infty\epsilon^k \vc g_k(\vc x).
\label{eq:vp_2}
\end{align}

Matching the terms of the same order $\epsilon^k$ in equation~\eqref{eq:vp_1}
and~\eqref{eq:vp_2}, we obtain
\begin{align}\label{eq:seq_gk}
\epsilon^1 &: \vc g_1(\vc x) =- \eta \nabla f(\vc x) \nonumber\\
\epsilon^2 &: \vc g_2(\vc x)=\vc 0 \nonumber\\
\epsilon^3 &: \vc g_3(\vc x)= -\nabla \vc g_1(\vc x)\vc g_1(\vc x) \nonumber\\
\epsilon^4 &: \vc g_4(\vc x)=\vc 0 \nonumber\\
\epsilon^5 &: \vc g_5(\vc x)=-\nabla \vc g_3(\vc x)\vc g_1(\vc x)-\nabla \vc g_1(\vc x)\vc g_3(\vc x)\nonumber\\
\epsilon^6 &: \vc g_6(\vc x)=\vc 0 \nonumber\\
\epsilon^7 &: \vc g_7(\vc x)=-\nabla \vc g_5(\vc x)\vc g_1(\vc x)-\nabla \vc g_3(\vc x)\vc g_3(\vc x)-\nabla \vc g_1(\vc x)\vc g_5(\vc x)\nonumber\\
        &\quad \vdots \nonumber\\
\epsilon^k        &: \vc g_k(\vc x) = -\sum_{\ell=1}^{k-2}\nabla \vc g_\ell(\vc x)\vc g_{k-\ell-1}(\vc x),\quad \bmod(k,2)=1.
\end{align}
Note that all functions $\vc g_{k}$ with an even index vanish, i.e., $\vc g_{2k}=\vc 0$ for all $k\in\mathbb N$.
Equation~\eqref{eq:seq_gk} for the odd indices can be rearranged as
\begin{equation}
\vc g_{2k+1}(\vc x) = -\sum_{\ell=1}^{2k-1}\nabla \vc g_\ell(\vc x)\vc g_{2k-\ell}(\vc x),\quad k\geq 1,
\end{equation}
where we have made the change of variables $k\mapsto 2k+1$.
Therefore, the flow on the slow manifold $\mathcal M_\epsilon$ is given by
\begin{equation}
\vc x' = \epsilon \vc v = \epsilon \left[ -\epsilon\eta \nabla f(\vc x)+\sum_{k=1}^{r}\epsilon^{2k+1}\vc g_{2k+1}(\vc x)
+\mathcal O(\epsilon^{2r+3})
\right].
\end{equation}
Rescaling the fast time $\tau$ back to the original time $t$, we obtain equation~\eqref{eq:reducedFlow}.
Note that the function $\vc g_{2r+1}$ involves derivatives of $f$ up to and including order $r+1$.
Since we assumed $f\in C^{r+1}(\mathcal X)$, the formal infinite series~\eqref{eq:Meps_formal_series} must be 
truncated at this order.
\end{proof}

\begin{rem}
Theorem~\ref{thm:slowFlow_eucl} guarantees that the slow manifold persists for $\mu>\mu_0$. It would be
very attractive to estimate the value of $\mu_0$; however, there are two technical difficulties in the way 
of obtaining such estimates. First, $\mu_0$ depends on the function $f$ being minimized. Furthermore, even for a 
given function $f$, obtaining the parameter $\mu_0$ is extremely difficult. The difficulty lies in the fact that, in 
singular perturbation theory, the invariant manifold arises as the fixed point of a certain integral equation~\cite{jones1995,wiggins1994normally}. 
To prove the existence of the fixed point, one uses an infinite-dimensional contraction mapping argument. The contraction
property of this map only holds for sufficiently large $\mu$ (or, equivalently, sufficiently small $\epsilon$). Finding $\mu_0$, 
such that for $\mu>\mu_0$ the contraction property holds, is a tedious task and has only been carried out for extremely simple 
cases (see, e.g., Example 1.3.2. in~\cite{wiggins1994normally}). 
Nonetheless, in the present context, i.e., gradient flow of a convex function, we expect the threshold $\mu_0$ to be small.
The examples shown in Section~\ref{sec:examples} confirm that our invariant slow manifolds persist 
for relatively small values of $\mu$ (see, in particular, the discussion on example 2).
\label{rem:mu0}
\end{rem}

As mentioned in Remark~\ref{rem:gradient}, to the leading order, the reduced flow on the slow manifold is
a gradient flow. It is well-known that the minimizer $\vc x^\ast$ is a locally asymptotically stable fixed point of the 
gradient flow for convex, continuously differentiable functions $f$. The natural question is whether,
for higher order truncations, the minimizer $\vc x^\ast$ remains an asymptotically stable fixed point. 
It is straightforward to show this for the third-order truncations of the reduced flow:

\begin{prop}
Let $f\in C^2(\mathcal X)$ be a convex function.
Up to the third order (neglecting $\mathcal O(\epsilon^5)$ terms), 
the reduced equation~\eqref{eq:reducedFlow} reads
\begin{equation}
\dot{\vc  x} = -\epsilon\eta \nabla f(\vc x) - \epsilon^3\eta^2 \nabla^2f(\vc x)\nabla f(\vc x).
\label{eq:reduced_2ndOrder}
\end{equation}
The minimizer $\vc x^\ast$ is a locally asymptotically stable fixed point of the above equation. 
\end{prop}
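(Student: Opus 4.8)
The plan splits into two independent steps: first verifying the explicit third-order reduced flow, then proving the stability of $\vc x^\ast$. For the first step I would just evaluate the maps $\vc g_k$ supplied by Theorem~\ref{thm:slowFlow_eucl}. By definition $\vc g_1(\vc x)=-\eta\nabla f(\vc x)$ and $\vc g_2=\vc 0$; the $k=1$ instance of the recursion~\eqref{eq:gk} gives $\vc g_3(\vc x)=-\nabla\vc g_1(\vc x)\vc g_1(\vc x)$, and since $\nabla\vc g_1=-\eta\nabla^2 f$ this equals $-\eta^2\nabla^2 f(\vc x)\nabla f(\vc x)$. Substituting into~\eqref{eq:reducedFlow} with $r=1$ produces~\eqref{eq:reduced_2ndOrder}; note that $\vc g_3$ involves only second derivatives of $f$, so $f\in C^2(\mathcal X)$ is exactly the regularity needed for this truncation to make sense. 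This step is purely computational.

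For the stability of $\vc x^\ast$ I would use the Lyapunov function $V(\vc x)=f(\vc x)-f(\vc x^\ast)$, which is positive definite on a neighborhood of $\vc x^\ast$ and vanishes only there because $\vc x^\ast$ is the unique minimizer. Differentiating along~\eqref{eq:reduced_2ndOrder},
\[
\dot V=\langle\nabla f(\vc x),\dot{\vc x}\rangle=-\epsilon\eta\,|\nabla f(\vc x)|^2-\epsilon^3\eta^2\,\langle\nabla f(\vc x),\nabla^2 f(\vc x)\nabla f(\vc x)\rangle .
\]
Since $\epsilon=\mu^{-1}>0$ and $\eta=e^{2\alpha_t+\beta_t}>0$, and convexity of $f$ gives $\nabla^2 f(\vc x)\succeq 0$, the second term is nonpositive, so $\dot V\leq -\epsilon\eta|\nabla f(\vc x)|^2\leq 0$, with equality forcing $\nabla f(\vc x)=\vc 0$, i.e.\ $\vc x=\vc x^\ast$. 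LaSalle's invariance principle (or the classical Lyapunov theorem, after observing that $\{\vc x^\ast\}$ is the only invariant set contained in $\{\dot V=0\}$) then yields local asymptotic stability.

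The point to be careful about — and the reason I would not argue by linearization alone — is that convexity does not force $\nabla^2 f(\vc x^\ast)$ to be positive definite. Linearizing~\eqref{eq:reduced_2ndOrder} at $\vc x^\ast$ gives Jacobian $-\epsilon\eta\nabla^2 f(\vc x^\ast)-\epsilon^3\eta^2\nabla^2 f(\vc x^\ast)^2$ — the contribution from differentiating $\nabla^2 f\,\nabla f$ that carries third derivatives drops out because $\nabla f(\vc x^\ast)=\vc 0$ — whose eigenvalues are $-\epsilon\eta\lambda_i(1+\epsilon^2\eta\lambda_i)$ with $\lambda_i\geq 0$ the eigenvalues of $\nabla^2 f(\vc x^\ast)$; these are all strictly negative precisely when $\nabla^2 f(\vc x^\ast)\succ 0$. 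When the Hessian is only semidefinite at $\vc x^\ast$ (for instance $f(x)=x^4$ in one dimension) linearization is inconclusive, so the Lyapunov/LaSalle route above is the robust one. It is worth remarking that the added third-order term does not endanger the descent property: because $\nabla^2 f\succeq 0$ it acts as a Newton-type correction whose inner product with $\nabla f$ is nonnegative, so it can only reinforce, never reverse, the decay of $V$.
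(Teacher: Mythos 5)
Your proposal is correct, and the computational half (extracting $\vc g_1=-\eta\nabla f$, $\vc g_2=\vc 0$, $\vc g_3=-\nabla\vc g_1\vc g_1=-\eta^2\nabla^2 f\,\nabla f$ from~\eqref{eq:gk} and noting that $f\in C^2$ suffices for this truncation) is exactly what underlies~\eqref{eq:reduced_2ndOrder} in the paper. Where you diverge is in the stability argument: you take the plain Lyapunov function $V(\vc x)=f(\vc x)-f(\vc x^\ast)$ and use convexity, $\nabla^2 f\succeq 0$, to sign the cubic correction term in $\dot V$, then conclude via LaSalle or the strict Lyapunov theorem; the paper instead uses the augmented function $\mathcal E(\vc x)=\epsilon\eta\bigl(f(\vc x)-f(\vc x^\ast)\bigr)+\tfrac12\epsilon^3\eta^2\|\nabla f(\vc x)\|^2$, chosen precisely so that $\nabla\mathcal E=-\dot{\vc x}$, which gives $\tfrac{\id}{\id t}\mathcal E=-\|\dot{\vc x}\|^2$ without any need to sign the individual terms. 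The trade-off is instructive: your route is more elementary and your treatment of the set $\{\dot V=0\}$ (equality forces $\nabla f=\vc 0$, hence $\vc x=\vc x^\ast$ by uniqueness of the minimizer) is actually more explicit than the paper's, and your remark that linearization is inconclusive when $\nabla^2 f(\vc x^\ast)$ is merely semidefinite correctly explains why a Lyapunov argument is the right tool. On the other hand, your argument leans on convexity to discard the $\epsilon^3$ term and does not obviously extend beyond third order, since the signs of $\langle\nabla f,\vc g_{2k+1}\rangle$ for $k\geq 2$ are not controlled by convexity alone; the paper's choice of $\mathcal E$ is exactly the construction that generalizes to arbitrary truncation order in Theorem~\ref{thm:assymptStable} (via the potentials $\phi_k$ of Lemma~\ref{lem:sym} in the appendix). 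So both proofs are valid for this proposition; the paper's is the one built to scale.
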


\begin{proof}
We use the Lyapunov function
\begin{equation}
\mathcal E(\vc x)=\epsilon\eta\left( f(\vc x)-f(\vc x^\ast)\right)+
\frac12\epsilon^3\eta^2\| \nabla f(\vc x)\|^2.
\end{equation}
Note that $\mathcal E(\vc x)>0$ for all $x\in\mathcal X\backslash \{\vc x^\ast\}$ and that $\mathcal E(\vc x^\ast)=0$.
The gradient of this Lyapunov function is given by
\begin{equation}
\nabla \mathcal E(\vc x) = \epsilon\eta \nabla f(\vc x) + \epsilon^3\eta^2\nabla^2 f(\vc x)\nabla f(\vc x)=-\dot{\vc x},
\end{equation}
and therefore $\frac{\id}{\id t}\mathcal E(\vc x)=\langle \nabla \mathcal E(\vc x),\dot{\vc x}\rangle<0$.
This completes the proof.
\end{proof}

\begin{rem}
Examining equation~\eqref{eq:reduced_2ndOrder}, the reduced equation up to the third order is equivalent to 
a preconditioned gradient descent flow $\dot{\vc x} = -[P(\vc x)]^{-1}\nabla f(\vc x)$ with the 
preconditioner $[P(\vc x)]^{-1} = \epsilon\eta\left[ I + \epsilon^2\eta \nabla^2f(\vc x)\right]$. Since 
$I+\epsilon^2\eta \nabla^2f(\vc x)$ is a near-identity map, its inverse exists for sufficiently small $\epsilon$.
\end{rem}

It turns out that the minimum $\vc x^\ast$ is an asymptotically stable fixed point of the reduced flow 
at any finite order of truncation. This is stated in the following theorem. 
The proof, however, is much more involved and is presented in the appendix. 

\begin{thm}\label{thm:assymptStable}
Let $f\in C^{r+1}(\mathcal X)$ be a convex function.
The point $\vc x=\vc x^\ast$ is an asymptotically stable fixed point of the reduced flow~\eqref{eq:reducedFlow}. 
More precisely, $\vc x=\vc x^\ast$ is an asymptotically
stable fixed point of the differential equation
\begin{equation}\label{eq:reducedFlow_r}
\dot{\vc  x} = \sum_{k=0}^{p}\epsilon^{2k+1}\vc g_{2k+1}(\vc x),
\end{equation}
for all $0\leq p\leq r$ where $\vc g_k:\mathcal X\to\mathbb R^n$ are defined in~\eqref{eq:gk}.
\end{thm}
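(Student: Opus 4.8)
The plan is to uncover the algebraic structure of the maps $\vc g_{2k+1}$, which reduces~\eqref{eq:reducedFlow_r} to a preconditioned gradient flow, and then close the argument with a single Lyapunov function built from $f$ itself. The structural claim I would prove first, by induction on $k$ using the recursion~\eqref{eq:gk}, is that every $\vc g_{2k+1}$ factors through $\nabla f$: there exist continuous matrix-valued maps $\vc M_{2k+1}:\mathcal X\to\mathbb R^{n\times n}$ with
\[
\vc g_{2k+1}(\vc x)=\vc M_{2k+1}(\vc x)\,\nabla f(\vc x),\qquad \vc M_1(\vc x)=-\eta\,I .
\]
Indeed, in $\vc g_{2k+1}=-\sum_{\ell=1}^{2k-1}\nabla\vc g_\ell\,\vc g_{2k-\ell}$ only odd indices contribute (the even $\vc g$'s vanish), and for odd $\ell$ the inductive hypothesis gives $\vc g_{2k-\ell}=\vc M_{2k-\ell}\nabla f$, so each surviving term equals $\big(\nabla\vc g_\ell\,\vc M_{2k-\ell}\big)\nabla f$; summing yields the claim with $\vc M_{2k+1}=-\sum_{\ell\ \text{odd}}\nabla\vc g_\ell\,\vc M_{2k-\ell}$. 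Since $\vc g_{2k+1}$ involves derivatives of $f$ of order at most $k+1\le r+1$ (the same bound noted for $\vc g_{2r+1}$), and $f\in C^{r+1}$, each $\vc M_{2k+1}$ is continuous and hence bounded on the compact set $\mathcal X$.

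With this in hand, the truncated reduced flow~\eqref{eq:reducedFlow_r} becomes the preconditioned gradient flow
\[
\dot{\vc x}=-\epsilon\,\vc A_p(\vc x,\epsilon)\,\nabla f(\vc x),\qquad
\vc A_p(\vc x,\epsilon)=\eta\,I-\sum_{k=1}^{p}\epsilon^{2k}\,\vc M_{2k+1}(\vc x) .
\]
Because $\eta>0$ and $\sup_{\mathcal X}\|\vc M_{2k+1}\|<\infty$, there is $\epsilon_1>0$ (depending on $f$ and $r$, and absorbable into the constant $\mu_0$ of Theorem~\ref{thm:slowFlow_eucl}) such that for $0<\epsilon<\epsilon_1$ and all $0\le p\le r$ the symmetric part of $\vc A_p$ satisfies $\tfrac12(\vc A_p+\vc A_p^{\top})\succeq\tfrac{\eta}{2}I$ on all of $\mathcal X$; in particular $\langle\vc w,\vc A_p\vc w\rangle\ge\tfrac{\eta}{2}\|\vc w\|^2$ for every $\vc w\in\mathbb R^n$.

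Now I would take $V(\vc x)=f(\vc x)-f(\vc x^\ast)$ as a Lyapunov function on a neighborhood $U\subseteq\mathcal X$ of $\vc x^\ast$. Since $f$ is convex with unique minimizer $\vc x^\ast$, we have $V>0$ on $U\setminus\{\vc x^\ast\}$ and $V(\vc x^\ast)=0$, and $\nabla f$ vanishes on $U$ only at $\vc x^\ast$ (a critical point of a convex function is a global minimizer). Along~\eqref{eq:reducedFlow_r},
\[
\frac{\id}{\id t}V=\langle\nabla f,\dot{\vc x}\rangle=-\epsilon\,\langle\nabla f,\vc A_p\nabla f\rangle\le-\tfrac{\epsilon\eta}{2}\|\nabla f\|^2\le 0,
\]
with equality only at $\vc x^\ast$. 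Lyapunov's theorem then gives local asymptotic stability of $\vc x^\ast$ for each $0\le p\le r$, which is the assertion of the theorem. (As a byproduct, this also shows $\vc x^\ast$ is the only equilibrium of~\eqref{eq:reducedFlow_r} near $\vc x^\ast$, since $\nabla\Psi$-type degeneracies are ruled out by the invertibility of $\vc A_p$.)

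The crux — and the step I expect to demand the most care — is the factorization $\vc g_{2k+1}=\vc M_{2k+1}\nabla f$. It is exactly what makes the merely-convex case tractable: when $\nabla^2 f(\vc x^\ast)$ is singular, linearization at $\vc x^\ast$ is inconclusive, and the naive estimate $|\langle\nabla f,\vc g_{2k+1}\rangle|\lesssim\|\nabla f\|\,\|\vc x-\vc x^\ast\|$ is too weak near a flat minimum to be dominated by the leading term $-\epsilon\eta\|\nabla f\|^2$; the factorization upgrades it to $|\langle\nabla f,\vc g_{2k+1}\rangle|\lesssim\|\nabla f\|^2$, which is precisely the bound needed for the sign of $\tfrac{\id}{\id t}V$ to be controlled uniformly by the leading term for small $\epsilon$. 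A secondary bookkeeping point is regularity: for $p=r$ the vector field in~\eqref{eq:reducedFlow_r} is only continuous, so one should invoke the form of Lyapunov's stability theorem that does not presuppose uniqueness of solutions.
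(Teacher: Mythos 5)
Your proof is correct, but it takes a genuinely different route from the paper. The paper's key structural fact (Lemma~\ref{lem:sym}) is that each $\vc g_k$ is itself a gradient, $\vc g_k=\nabla\phi_k$ with $\phi_1=-\eta f$ and $\phi_{2k+1}=-\tfrac12\sum_{\ell}\langle\nabla\phi_\ell,\nabla\phi_{2k-\ell}\rangle$; consequently the truncated flow~\eqref{eq:reducedFlow_r} is exactly $\dot{\vc x}=-\nabla\mathcal E$ for the corrected Lyapunov function $\mathcal E(\vc x)=\epsilon\eta\left(f(\vc x)-f(\vc x^\ast)\right)+\sum_{k\ge 1}\tfrac{\epsilon^{2k+1}}{2}\sum_{\ell}\langle\vc g_\ell,\vc g_{2k-\ell}\rangle$, giving $\tfrac{\id}{\id t}\mathcal E=-\|\dot{\vc x}\|^2$ with no explicit smallness condition on $\epsilon$ (at the price of having to argue positivity of the higher-order terms of $\mathcal E$ near $\vc x^\ast$). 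You instead extract the weaker structural fact that the recursion~\eqref{eq:gk} factors through the gradient, $\vc g_{2k+1}=\vc M_{2k+1}\nabla f$, so the truncation is a preconditioned gradient flow $\dot{\vc x}=-\epsilon \vc A_p(\vc x,\epsilon)\nabla f(\vc x)$, and you close with the plain Lyapunov function $V=f-f(\vc x^\ast)$ once the symmetric part of $\vc A_p$ is uniformly positive definite. Your inductive factorization, the regularity bookkeeping ($\vc M_{2k+1}$ uses derivatives of $f$ up to order $k+1\le r+1$, bounded on the compact $\mathcal X$), and the identification of why the factorization is essential in the merely convex (possibly flat-minimum) case are all sound, and your remark about invoking a Lyapunov theorem that does not presuppose uniqueness of solutions for $p=r$ is a legitimate point the paper glosses over. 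What your route buys is a simpler Lyapunov function and no need to check sign conditions on the higher-order terms of $\mathcal E$; what it costs is an additional smallness threshold $\epsilon<\epsilon_1$ obtained from compactness of $\mathcal X$, which is consistent with the regime $\mu>\mu_0$ of Theorem~\ref{thm:slowFlow_eucl} (where $\mu_0$ is not explicit anyway) but is formally an extra restriction that the paper's gradient-structure argument avoids; the paper's lemma also yields the stronger qualitative statement that the reduced flow is itself a gradient system, which your argument does not recover.
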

\begin{proof}
See Appendix~\ref{app:proof_asymStable}.
\end{proof}

\subsection{Non-Euclidean case}
In this section, we consider an important class of distance-generating functions $h$
that appear in the Bregman divergence~\eqref{eq:bregman_div} and have the form
\begin{equation}\label{eq:h_quad}
h(\vc x) = \langle \vc x,\vc x\rangle_H\triangleq \frac12 \langle \vc x,H\vc x\rangle,
\end{equation}
where $\langle\cdot,\cdot\rangle$ denotes the Euclidean inner product and $H$ is a $n\times n$ 
symmetric, positive-definite matrix. Note than $\langle\cdot,\cdot\rangle_H$ defines a
Riemannian metric on the space $\mathbb R^n$.

With the choice~\eqref{eq:h_quad}, the Bregman divergence is given by
\begin{equation}
D_h(\vc y,\vc x) = \frac12 \langle \vc y-\vc x,H(\vc y-\vc x)\rangle,
\end{equation}
and the associated Euler--Lagrange equation~\eqref{eq:EL} reduces to
\begin{equation}
\ddot{\vc  x} +\left(e^{\alpha_t}-\dot{\alpha}_t\right) \dot{\vc x} + e^{2\alpha_t+\beta_t}
H^{-1} \nabla f(\vc x) = 0. 
\label{eq:EL_H}
\end{equation}
Note that, since $H$ is symmetric and positive-definite, its inverse exists.
We have the following result for the slow manifold reduction of~\eqref{eq:EL_H}
which is quite similar to Theorem~\ref{thm:slowFlow_eucl}.

\begin{thm}\label{thm:slowFlow_noneucl}
Let $f\in C^{r+1}(\mathcal X)$ with $r\geq 0$.
Define $\mu =e^{\alpha_t}-\dot{\alpha}_t$ and $\eta = e^{2\alpha_t+\beta_t}$. If $\mu$ and 
$\eta$ are constant, then there exists $\mu_0>0$ such that for all $\mu >\mu_0$ the following are true.
\begin{enumerate}[(i)]
\item The trajectories of the Euler--Lagrange equation~\eqref{eq:EL_H}
converge exponentially fast to an $n$-dimensional invariant manifold embedded 
in the $2n$-dimensional phase space $(\vc x,\dot{\vc  x})$.
Furthermore, this invariant manifold is a graph over the $\vc x$ coordinates 

\item The flow of~\eqref{eq:EL_H} on this invariant manifold is given by
\begin{equation}\label{eq:reducedFlow_H}
\dot{\vc  x} = \sum_{k=0}^{r}\epsilon^{2k+1}\vc g_{2k+1}(\vc x)+\mathcal O(\epsilon^{2r+3}),
\end{equation}
where $\epsilon=\mu^{-1}$ and the maps $\vc g_k:\mathcal X\to\mathbb R^n$ are defined recursively by
\begin{align}
\vc g_1(\vc x) &= -\eta H^{-1}\nabla f(\vc x),\nonumber\\
\vc g_{2k}(\vc x) &= \vc 0, \nonumber\\
\vc g_{2k+1}(\vc x) &= -\sum_{\ell=1}^{2k-1}\nabla\vc  g_\ell(\vc x)\vc g_{2k-\ell}(\vc x),\quad k\geq 1.
\label{eq:gk_H}
\end{align}
\end{enumerate}
\end{thm}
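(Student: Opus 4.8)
The plan is to mirror the proof of Theorem~\ref{thm:slowFlow_eucl} almost verbatim, the only structural change being the appearance of the fixed invertible matrix $H^{-1}$ in front of $\nabla f$. First I would recast the second-order equation~\eqref{eq:EL_H} as a first-order system by setting $\vc v=\dot{\vc x}$, obtaining $\dot{\vc x}=\vc v$ and $\dot{\vc v}=-\mu\vc v-\eta H^{-1}\nabla f(\vc x)$, and then pass to the fast time $\tau=t/\epsilon$ with $\epsilon=\mu^{-1}$ to get $\vc x'=\epsilon\vc v$ and $\vc v'=-\vc v-\epsilon\eta H^{-1}\nabla f(\vc x)$. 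The crucial observation is that the term $H^{-1}\nabla f$ is multiplied by $\epsilon$, so in the singular limit $\epsilon=0$ the fast subsystem is exactly $\vc v'=-\vc v$, identical to the Euclidean case and independent of $H$. Hence the critical manifold is again $\mathcal M_0=\{(\vc x,\vc v)\in\mathcal X\times\mathbb R^n:\vc v=\vc 0\}$, and the linearization of the fast flow normal to $\mathcal M_0$ is $-I_n$, which is hyperbolic with every eigenvalue equal to $-1$; in particular $\mathcal M_0$ is normally hyperbolic and attracting, so Fenichel's theory applies and yields, for $\epsilon$ sufficiently small (equivalently $\mu>\mu_0$), a $C^r$ invariant manifold $\mathcal M_\epsilon$ that is a graph $\vc v=\sum_{k=1}^\infty\epsilon^k\vc g_k(\vc x)$ over the $\vc x$ coordinates and $\mathcal O(\epsilon)$ close to $\mathcal M_0$. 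This establishes part (i).

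For part (ii) I would substitute the graph ansatz into the invariance equation exactly as before. Differentiating $\vc v=\sum_k\epsilon^k\vc g_k(\vc x)$ along the flow and using $\vc x'=\epsilon\vc v$ gives $\vc v'=\sum_{k,j\geq1}\epsilon^{k+j+1}\nabla\vc g_k(\vc x)\vc g_j(\vc x)$, while the $\vc v$-equation gives $\vc v'=-\epsilon[\vc g_1(\vc x)+\eta H^{-1}\nabla f(\vc x)]-\sum_{k\geq2}\epsilon^k\vc g_k(\vc x)$. Matching powers of $\epsilon$ yields $\vc g_1(\vc x)=-\eta H^{-1}\nabla f(\vc x)$, $\vc g_2=\vc 0$, and for $k\geq3$ the same recursion $\vc g_k=-\sum_{\ell=1}^{k-2}\nabla\vc g_\ell\,\vc g_{k-\ell-1}$ as in Theorem~\ref{thm:slowFlow_eucl}, since the algebra of the matching is untouched and only $\vc g_1$ carries the factor $H^{-1}$. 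An induction on $k$ then shows $\vc g_{2k}=\vc 0$ for all $k$, because in the recursion for an even index every term pairs an odd index with an even index and therefore vanishes; relabelling $k\mapsto2k+1$ gives~\eqref{eq:gk_H}. The reduced flow on $\mathcal M_\epsilon$ is $\vc x'=\epsilon\vc v=\sum_{k\geq0}\epsilon^{2k+2}\vc g_{2k+1}(\vc x)$, and rescaling $\tau$ back to the original time $t$ produces~\eqref{eq:reducedFlow_H}. Finally, since $\vc g_{2r+1}$ involves derivatives of $f$ up to and including order $r+1$ and $f\in C^{r+1}(\mathcal X)$, the formal series must be truncated at that order, with remainder $\mathcal O(\epsilon^{2r+3})$.

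I do not expect a genuine obstacle here: the entire argument is a transcription of the Euclidean proof, and the presence of the constant invertible matrix $H^{-1}$ affects only the leading coefficient $\vc g_1$ and not the fast--slow splitting. The one point worth stating explicitly is that $H^{-1}$ does not disturb normal hyperbolicity of the critical manifold $\mathcal M_0$, which is immediate because $H^{-1}\nabla f$ enters the fast equation only at order $\epsilon$; and that $\vc g_1=-\eta H^{-1}\nabla f$ has the same regularity as $\nabla f$, so the $C^{r+1}$ hypothesis on $f$ produces the same truncation order as in Theorem~\ref{thm:slowFlow_eucl}.
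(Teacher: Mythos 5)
Your proposal is correct and takes exactly the approach the paper intends: the paper itself omits the proof, noting only that it is ``quite similar to the Euclidean case,'' and your transcription of the Euclidean argument---first-order reformulation, passage to fast time, Fenichel for the critical manifold $\{\vc v=\vc 0\}$, graph ansatz, and power matching with $H^{-1}$ absorbed into $\vc g_1$---is the argument being referenced. The explicit remark that $H^{-1}\nabla f$ enters only at order $\epsilon$ and hence does not affect normal hyperbolicity is the right thing to verify and matches the paper's intent.
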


\begin{proof}[Proof of Theorem~\ref{thm:slowFlow_noneucl}]
The proof is quite similar to the Euclidean case (Theorem~\ref{thm:slowFlow_eucl}) and therefore is omitted here
for brevity. 
\end{proof}

We point out that the only difference in the above non-Euclidean result compared to the Euclidean case 
is the appearance of $H^{-1}$ in the definition of $g_1$ in~\eqref{eq:gk_H} which trickles to 
the higher order terms. For instance, we have 
\begin{equation}
\vc g_3(\vc x) = -\eta^2 \left[ H^{-1} \nabla^2 f(\vc x) H^{-1}\right] \nabla f(\vc x).
\end{equation}

Furthermore, Theorem~\ref{thm:assymptStable} also holds in the non-euclidean case. Namely,
the minimizer $\vc x^\ast$ is an asymptotically stable fixed point of~\eqref{eq:reducedFlow_H}
truncated to any order $0\leq p\leq r$. The proof is similar to the Euclidean case and
therefore is omitted here.

%{\color{blue}
\section{Multiscale analysis -- Non-autonomous case}\label{sec:slowManRed_nonaut}

The results in Section~\ref{sec:slowManRed_aut} hold under the key assumptions
that $\mu =e^{\alpha_t}-\dot{\alpha}_t$ and $\eta = e^{2\alpha_t+\beta_t}$ are independent of time. 
This implies that the accelerated gradient flow~\eqref{eq:EL} is an autonomous ODE.
While these assumptions hold for an important subclass of the accelerated gradient flows,
they do not hold for the Nesterov flow~\eqref{eq:nesterov_cont}. In this section, we 
treat the non-autonomous case of the Nesterov flow separately
by applying a time-dependent extension of the singular perturbation theory~\cite{IP_haller08,haragus2010,carvalho2012,roberts2019}.
The results are similar in nature to those of Section~\ref{sec:slowManRed_aut}; however,
in the non-autonomous case, the slow manifold is a graph over $(\vc x,t)$
in the extended phase space $(\vc x,t,\vc v)$.

First, we write the Nesterov flow in the form of an
autonomous ODE in terms of the extended phase space variables $(\vc x,\vc v,t)$
where $\vc v= \dot{\vc x}:=\id \vc x/\id \theta$. Here, $\theta=t-t_0$ is a re-parametrization of 
time.
In the extended phase space, equation~\eqref{eq:nesterov_cont} can be written as a system of first-order differential equations
\begin{equation}
\dot{\vc x}= \vc v,\quad \dot{\vc v} = -\frac{\rho}{t}\vc v -\nabla f(\vc x),\quad \dot t =1,
\label{eq:EL_1st_nonAut}
\end{equation}
with some initial conditions $\vc x(0)=\vc x_0\in\mathcal X$, $\vc v(0)=\vc v_0\in\mathbb R^n$
and $t(0)=t_0>0$. Note that $(\vc x,\vc v,t)$ are functions of the time-like variable $\theta$
and $(\dot{\vc x},\dot{\vc v},\dot t)$ denotes their derivatives with respect to $\theta$.

Next, we introduce the rescaled time $\tau=\theta/\epsilon$ where $\epsilon=\rho^{-1}$.
Denoting the derivatives with respect to the 
fast time $\tau$ with a prime, we can recast equation~\eqref{eq:EL_1st_nonAut} as
\begin{equation}
\vc x' = \epsilon \vc v, \quad \vc v' = -\frac{1}{t}\vc v -\epsilon\nabla f(\vc x),\quad t' =\epsilon. 
\end{equation}
In the singular limit, $\epsilon=0$, we have
\begin{equation}
\vc x(\tau)=\vc x_0, \quad \vc v(\tau)=e^{-\tau/t_0}\vc v_0,\quad t(\tau)=t_0,
\end{equation}
for all $\tau\geq 0$. This implies that the plane 
\begin{equation}
\mathcal M_0 = \{(\vc x,\vc v,t)\in \mathcal X\times\mathbb R^n\times [t_0,t_0+T]: \vc v=\vc 0\}
\end{equation}
is the critical manifold towards which all trajectories converge with the exponential rate $e^{-\tau/t_0}$ (see figure~\ref{fig:SlowMan_NonAut}).

\begin{figure}
\centering
\includegraphics[width=.99\textwidth]{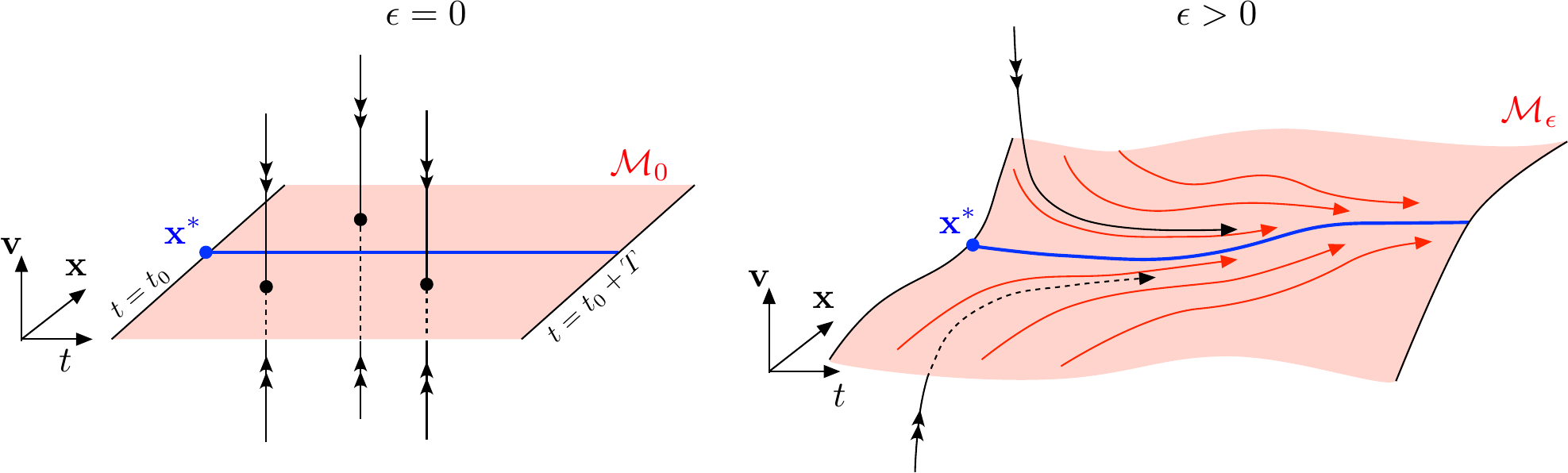}
\caption{
A sketch of the geometric singular perturbation theory for the non-autonomous accelerated gradient flow.
In the singular limit, $\epsilon=0$, the 
set $\mathcal M_0=\{(\vc x,t,\vc v)\in\mathcal X\times [t_0,t_0+T]\times\mathbb R^{n}:\ \vc v=0\}$ is filled with fixed points. The trajectories
$(\vc x(\tau),t(\tau),\vc v(\tau))=(\vc x_0,t_0,\vc v_0e^{-\tau/t_0})$ converge exponentially fast to $(\vc x_0,t_0,\vc 0)\in\mathcal M_0$ 
from any initial condition $(\vc x_0,t_0,\vc v_0)$. In other words, the set $\mathcal M_0$ is 
invariant and the global attractor of the system. For $0<\epsilon\ll 1$, 
the manifold $\mathcal M_0$ deforms into a nearby manifold $\mathcal M_\epsilon$
which is also invariant and globally attracting. However, $\mathcal M_\epsilon$ 
is not necessarily a collection of fixed points.
}
\label{fig:SlowMan_NonAut}
\end{figure}

The non-autonomous singular perturbation theory~\cite{IP_haller08,haragus2010} guarantees the following.
For sufficiently small $\epsilon>0$, the manifold $\mathcal M_0$ deforms into a nearby normally attracting invariant manifold
$\mathcal M_\epsilon$ which is $\mathcal O(\epsilon)$-close to 
the critical manifold $\mathcal M_0$. The trajectories of the Nesterov flow converge exponentially fast towards the 
invariant manifold  $\mathcal M_\epsilon$.
Furthermore, $\mathcal M_\epsilon$ is a graph over the slow variables $(\vc x,t)$. We write 
this graph as a formal functional series expansion in $\epsilon$, 
\begin{equation}
\mathcal M_\epsilon = \left\{(\vc x,\vc v,t)\in \mathcal X\times\mathbb R^n\times [t_0,t_0+T]: \vc v=\sum_{k=1}^\infty \epsilon^k \vc g_k(\vc x,t)\right\},
\end{equation}
where $\vc g_k:\mathcal X\times [t_0,t_0+T]\to \mathbb R^n$.
By differentiating this expression with respect to $\tau$, we obtain
\begin{align}
\vc v ' & = \sum_{k=1}^\infty \epsilon^k \left( \nabla\vc g_k(\vc x,t)\vc x'+\pard{\vc g_k}{t}\Big|_{(\vc x ,t)}t'\right)\nonumber\\
& = \sum_{k=1}^{\infty}\sum_{j=1}^\infty \epsilon^{k+j+1}\nabla \vc g_k\vc g_j + \sum_{k=1}^\infty\epsilon^{k+1} \pard{\vc g_k}{t}
\nonumber\\
& = \epsilon^2 \pard{\vc g_{1}}{t} + 
\sum_{k=3}^\infty \epsilon^k\left( \pard{\vc g_{k-1}}{t} + \sum_{j=1}^{k-2} \nabla \vc g_j\vc g_{k-j-1}\right).
\end{align}

On the other hand, using the fact that $\vc v' = -t^{-1}\vc v -\epsilon\nabla f(\vc x)$, we obtain
\begin{equation}
\vc v' = \epsilon \left(-\frac{1}{t}\vc g_1(\vc x,t) -\nabla f(\vc x)\right) - \sum_{k=2}^{\infty} \epsilon^k \frac{\vc g_k(\vc x,t)}{t}.
\end{equation}
Equating these two expressions, we obtain
\begin{subequations}
\begin{equation}
\vc g_1(\vc x,t) = - t\nabla f(\vc x), \quad \vc g_2(\vc x,t) = t\nabla f(\vc x),
\end{equation}
\begin{equation}
\vc g_k (\vc x,t) = -t\left[ \pard{\vc g_{k-1}}{t} +\sum_{j=1}^{k-2}\nabla \vc g_j\vc g_{k-j-1}\right]_{(\vc x,t)},\quad k\geq 3.
\end{equation}
\label{eq:nonAut_g}
\end{subequations}
Note that functions $\vc g_k$ include derivatives of $f$ up to order $\ceil{k/2}$.
The above results are summarized in the following theorem. 
\begin{thm}\label{thm:slowFlow_nonAut}
Let $f\in C^{r}(\mathcal X)$ with $r\geq 1$.
There exists $\rho_0>0$ such that for all $\rho >\rho_0$ the following holds.
\begin{enumerate}[(i)]
\item The trajectories of the Nesterov equation~\eqref{eq:nesterov_cont}
converge exponentially fast to an $(n+1)$-dimensional invariant manifold embedded 
in the $(2n+1)$-dimensional extended phase space 
$(\vc x,\dot{\vc  x},t)\in\mathcal X\times\mathbb R^n\times [t_0,t_0+T]$ for $0<t_0,T<\infty$.
Furthermore, this invariant manifold is a graph over the $(\vc x,t)$ coordinates 
(See figure~\ref{fig:SlowMan_NonAut} for an illustration).

\item The flow of~\eqref{eq:nesterov_cont} on the slow manifold $\mathcal M_\epsilon$ is given by
\begin{equation}
\dot{\vc  x} = \sum_{k=1}^{2r}\epsilon^{k}\vc g_{k}(\vc x,t)+\mathcal O(\epsilon^{2r+1}),
\end{equation}
where $\epsilon=\rho^{-1}$ and the maps $\vc g_k:\mathcal X\times[t_0,t_0+T]\to\mathbb R^n$ are defined recursively by~\eqref{eq:nonAut_g}.
\end{enumerate}
\end{thm}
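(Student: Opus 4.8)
The plan is to follow essentially the same architecture as the proof of Theorem~\ref{thm:slowFlow_eucl}, but replacing Fenichel's autonomous singular perturbation theory with its non-autonomous counterpart~\cite{IP_haller08,haragus2010}. The bulk of the computational work — rewriting~\eqref{eq:nesterov_cont} as the first-order system~\eqref{eq:EL_1st_nonAut} in the extended phase space, passing to the fast time $\tau=\theta/\epsilon$, identifying the critical manifold $\mathcal M_0=\{\vc v=\vc 0\}$ with its exponential attraction rate $e^{-\tau/t_0}$, and matching orders of $\epsilon$ to obtain the recursion~\eqref{eq:nonAut_g} for the coefficients $\vc g_k(\vc x,t)$ of the graph $\mathcal M_\epsilon$ — has already been carried out in the discussion preceding the theorem statement, so the proof can simply invoke that computation.

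First I would verify that the hypotheses of the non-autonomous singular perturbation theorem are met on the compact extended domain $\mathcal X\times\mathbb R^n\times[t_0,t_0+T]$: the critical manifold $\mathcal M_0$ is normally hyperbolic (indeed normally attracting, since the linearization of $\vc v'=-t^{-1}\vc v$ transverse to $\mathcal M_0$ has the single eigenvalue $-1/t<0$, uniformly bounded away from zero for $t\in[t_0,t_0+T]$ with $t_0>0$), and the vector field is $C^r$ because $f\in C^r(\mathcal X)$. This gives, for $\epsilon$ below some threshold — equivalently $\rho>\rho_0$ — the existence of a $C^r$ locally invariant manifold $\mathcal M_\epsilon$, $\mathcal O(\epsilon)$-close to $\mathcal M_0$, a graph over $(\vc x,t)$, toward which trajectories of the flow converge exponentially fast in $\tau$. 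This establishes part~(i).

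For part~(ii) I would substitute the graph representation $\vc v=\sum_k\epsilon^k\vc g_k(\vc x,t)$ into the $\vc v'$ equation, using $\vc x'=\epsilon\vc v$ and $t'=\epsilon$ to expand the total $\tau$-derivative of $\vc v$ along $\mathcal M_\epsilon$; matching powers of $\epsilon$ against the expression $\vc v'=-t^{-1}\vc v-\epsilon\nabla f(\vc x)$ yields exactly the recursion~\eqref{eq:nonAut_g}, as displayed above the theorem. Then $\dot{\vc x}=\vc v=\sum_{k\geq1}\epsilon^k\vc g_k(\vc x,t)$; truncating and tracking which derivatives of $f$ appear (each pair of consecutive indices $k$ picks up one more derivative, so $\vc g_k$ involves $f$ up to order $\lceil k/2\rceil$), the $C^r$ regularity forces the series to be truncated at order $2r$, with the next term $\mathcal O(\epsilon^{2r+1})$, which is the claimed reduced flow. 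Finally I would rescale $\tau$ back to $\theta$ (equivalently $t$).

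The main obstacle, and the one point requiring genuine care beyond transcribing the autonomous argument, is that $\mathcal M_\epsilon$ in the non-autonomous setting is only \emph{locally} invariant and the theory is typically stated on a \emph{compact} slow domain; the Nesterov flow's coefficient $\rho/t$ is singular at $t=0$, so one must restrict to $t\in[t_0,t_0+T]$ with $t_0>0$ and keep $T<\infty$ — hence the explicit appearance of $[t_0,t_0+T]$ in the statement. One should note that, since the asymptotic regime of interest is $t\gg0$, this restriction is harmless, and one may also remark that the attraction rate $e^{-\tau/t_0}$ degrades as $t_0\to0$, which is the reason $\rho_0$ cannot be taken uniform in $t_0$. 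No nontrivial estimate of $\rho_0$ is attempted, consistent with Remark~\ref{rem:mu0}.
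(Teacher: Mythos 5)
Your proposal is correct and takes essentially the same approach as the paper, whose proof of Theorem~\ref{thm:slowFlow_nonAut} is precisely the derivation preceding its statement: the extended-phase-space reformulation~\eqref{eq:EL_1st_nonAut}, the fast time $\tau=\theta/\epsilon$, the normally attracting critical manifold $\mathcal M_0$ on the compact domain $\mathcal X\times[t_0,t_0+T]$, the non-autonomous persistence result of~\cite{IP_haller08,haragus2010}, and the order-matching that yields~\eqref{eq:nonAut_g} with truncation at order $2r$ because $\vc g_k$ involves derivatives of $f$ up to order $\ceil{k/2}$. One peripheral remark of yours is backwards — as $t_0\to 0$ the rate $e^{-\tau/t_0}$ becomes \emph{faster}, not weaker; the normal attraction $1/t$ degrades as $t$ grows, which (together with the singularity of the coefficient $\rho/t$ at $t=0$ and the non-compactness issue of Remark~\ref{rem:finiteT}) is the actual reason for restricting to a finite interval $[t_0,t_0+T]$ — but this does not affect the validity of your argument.
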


%To the forth order, we have
%\begin{align}
%\vc v = & -\epsilon t\nabla f(\vc x) + \epsilon^2 t\nabla f(\vc x) 
%- \epsilon^3\left[ t\nabla f(\vc x)+t^3\nabla^2 f(\vc x)\nabla f(\vc x)\right]\nonumber\\
%&+ \epsilon^4\left[ t\nabla f(\vc x)+5t^3\nabla^2 f(\vc x)\nabla f(\vc x)\right] +\mathcal O(\epsilon^5).
%\end{align}

\begin{rem}
Here, we consider the critical manifold $\mathcal M_0$ over the finite time interval $[t_0,t_0+T]$ for some finite $T>0$. 
As a result, the critical manifold is a graph over the compact domain $\mathcal X\times [t_0,t_0+T]$. 
It is tempting to extend the  slow manifold over the infinite time interval $[t_0,\infty)$. However, the fixed-point 
argument that guarantees the persistence of the slow manifold $\mathcal M_\epsilon$ is not generally valid over non-compact
sets~\cite{jones1995}. 
\label{rem:finiteT}
\end{rem}

Note that to the leading order, we have
\begin{equation}
\frac{\id \vc x}{\id t} = \vc v = -\epsilon t\nabla f(\vc x).
\label{eq:nonAut_1st_01}
\end{equation}
Reparameterizing time, by defining $\hat t = \epsilon\, t^2/2$, we obtain the usual gradient descent
\begin{equation}
\frac{\id \vc x}{\id \hat t} = -\nabla f(\vc x).
\label{eq:nonAut_1st_02}
\end{equation}
The trajectories of the two systems~\eqref{eq:nonAut_1st_01} and~\eqref{eq:nonAut_1st_02} are identical since one is a reparameterization of the other.
Therefore, to the first order, the Nesterov flow reduced to the slow manifold coincides with the usual gradient descent. 
Recall that the same conclusion held for the autonomous case (cf. Remark~\ref{rem:gradient}). 

Furthermore, examining the terms $\vc g_k(\vc x,t)$ with $k\geq 2$, 
the non-autonomous slow manifold approximation contains higher-order terms 
with $t\nabla f(\vc x)$. Collecting all such terms, we obtain
\begin{equation}
\frac{\id \vc x}{\id t}= \vc v = t\nabla f(\vc x)\sum_{k=1}^{2r} (-\epsilon)^{k} = -S_r(\epsilon) t\nabla f(\vc x),
\label{eq:nonAut_reduced_all}
\end{equation}
where $S_r(\epsilon)=\epsilon\left(1-\epsilon^{2r}\right)/(1+\epsilon)$, using the geometric series formula. 
If $f\in C^\infty(\mathcal X)$ and $\epsilon<1$, we have $S_r(\epsilon)\to \epsilon/(1+\epsilon)$ as $r\to\infty$. 
Again, reparameterizing time by defining $\hat t = S_r(\epsilon) t^2/2$, we obtain the usual gradient flow~\eqref{eq:nonAut_1st_02}.

Recall from Remark~\ref{rem:finiteT} that the existence of the slow manifold $\mathcal M_\epsilon$ is 
guaranteed over the finite time interval $t\in[t_0,t_0+T]$. This prohibits asymptotic analysis of the 
reduced flow trajectories in the limit $t\to \infty$. However, in optimization, it suffices to 
ensure that the reduced flow~\eqref{eq:nonAut_reduced_all} reaches a small neighborhood 
$B_\delta(\vc x^\ast)$ of the minimizer $\vc x^\ast$, where $0<\delta\ll 1$ is the prescribed optimization tolerance
(here, $B_\delta(\vc x^\ast)$ denotes the ball of radius $\delta$ in $\mathcal X$ centered at $\vc x^\ast$).
The set $B_\delta(\vc x^\ast)$ can be reached in finite time avoiding the need for the existence of the slow manifold
for infinite times. 

\section{Examples}\label{sec:examples}
In this section, we demonstrate our results on three functions as listed in Table~\ref{tab:examples}
and plotted in figure~\ref{fig:ex_f}. These functions are convex with their global minima at the origin, $\vc x^\ast=\vc 0$. 
We divide our numerical results into two parts: 
Section~\ref{sec:examples_aut} contains the results for the autonomous case discussed in Section~\ref{sec:slowManRed_aut};
Section~\ref{sec:examples_nonaut} corresponds to the non-autonomous 
Nesterov flow discussed in Section~\ref{sec:slowManRed_nonaut}.
\renewcommand{\arraystretch}{1.5}
\begin{table}[h!]
	\centering
	\caption{Three functions used as examples here to demonstrate the results. 
		The parameters $\mu$ and $\eta$ refer to the parameters defined in Theorem~\ref{thm:slowFlow_eucl}. 
		The parameter $\rho$ denotes the constant in the Nesterov flow~\eqref{eq:nesterov_cont},
		and $\vc x=(x_1,x_2)$.}
	\begin{tabular}{cc|c|c|c|}
		\cline{3-5}
		       & & \multicolumn{2}{ c| }{Autonomous} & Non-autonomous \\ \hline
		\multicolumn{1}{ |c| }{Example \#} & $f(\vc x)$ & \hspace{.25cm}$\mu$\hspace{.25cm} & $\eta$ & $\rho$ \\ \hline
		\multicolumn{1}{ |c| }{1} & $\frac12 \left( x_1^2+5 x_2^2\right)$ & 8 & 1 & 3\\ \hline
		\multicolumn{1}{ |c| }{2} & $\frac{1}{4} \left( x_1^4+50 x_2^4\right)$ & 2 & 1 & 1.5\\ \hline
		\multicolumn{1}{ |c| }{3} & $\log(e^{x_1^2}+e^{4x_2^2})$ & 4 & 1 & 3\\ \hline
	\end{tabular}
	\label{tab:examples}
\end{table}
\renewcommand{\arraystretch}{1}

\subsection{Autonomous case}\label{sec:examples_aut}
The numerical results presented in this section correspond to the autonomous case
discussed in Section~\ref{sec:slowManRed_aut}. Recall that the autonomous case 
assumes that the coefficients $\mu =e^{\alpha_t}-\dot{\alpha}_t$ and $\eta = e^{2\alpha_t+\beta_t}$
are time-independent. For each example, we choose a different combination of these constant 
coefficients as listed in Table~\ref{tab:examples}. While the results are valid for all $\mu>\mu_0$, 
the parameter $\mu_0$ is not a priori known (see Remark~\ref{rem:mu0}). 
In example 2, we set $\epsilon=\mu^{-1}=0.5$ to demonstrate that the slow manifolds may persist even for relatively large values of the perturbation parameter $\epsilon$.
We also note that for such larger values of $\epsilon$ (such as the one in example 2), 
the trajectories of the accelerated gradient flow exhibit a more oscillatory behavior compared to smaller $\epsilon$ where the oscillations are mostly damped. 

\begin{figure}
\centering
\subfigure[Example 1]{\includegraphics[width=.32\textwidth]{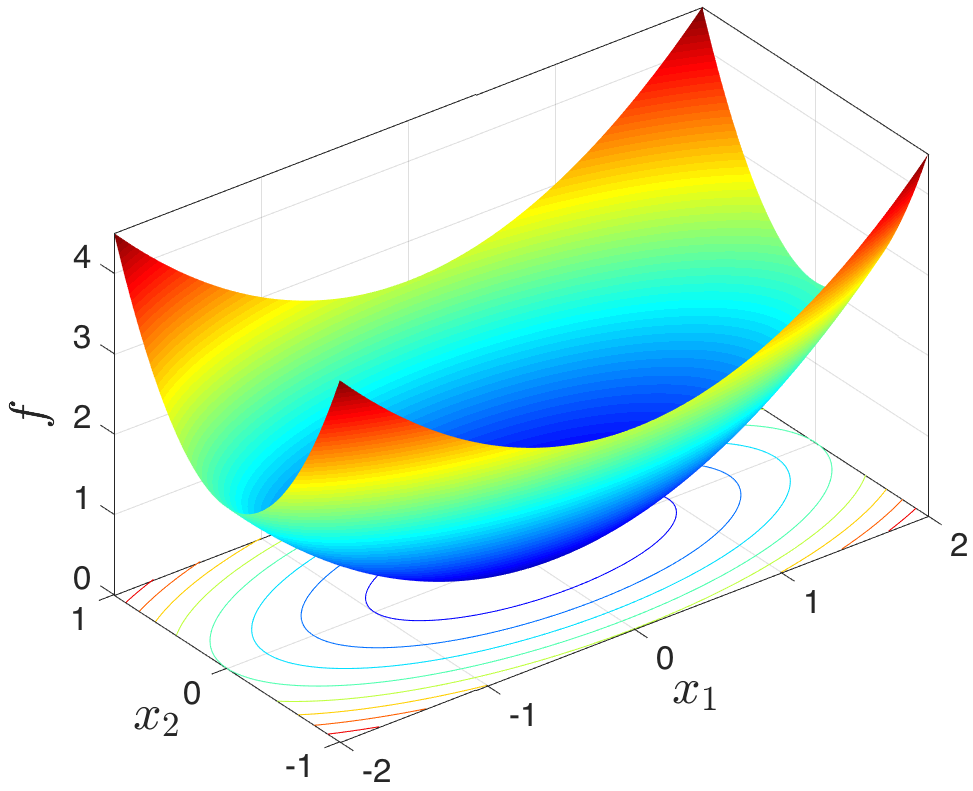}}
\subfigure[Example 2]{\includegraphics[width=.32\textwidth]{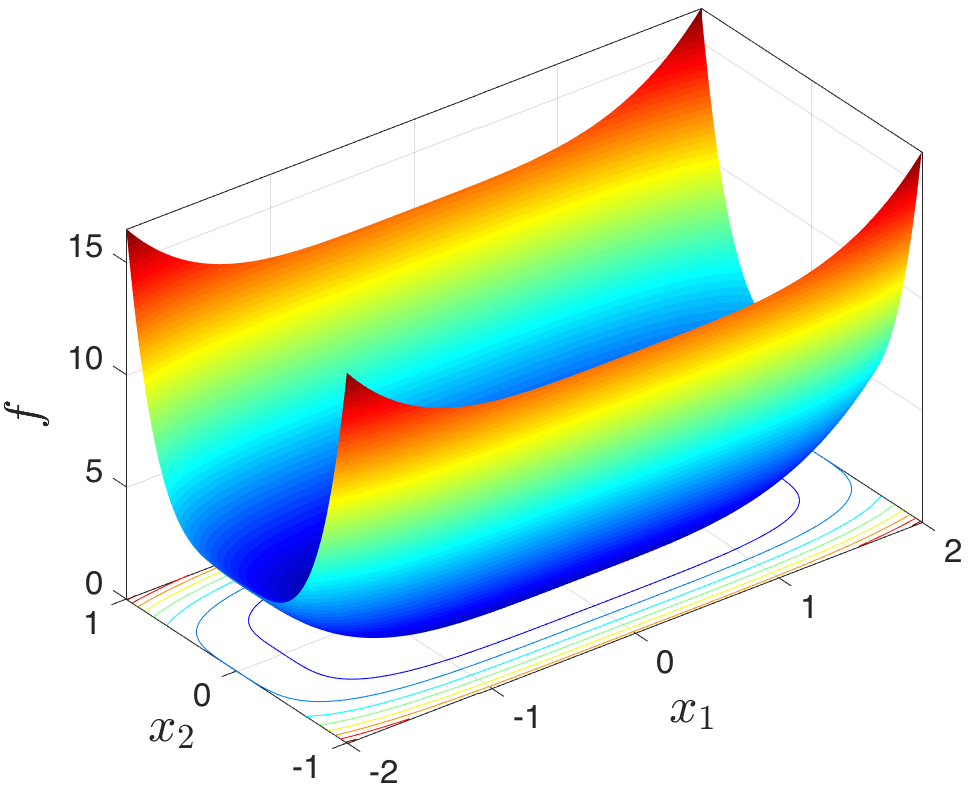}}
\subfigure[Example 3]{\includegraphics[width=.32\textwidth]{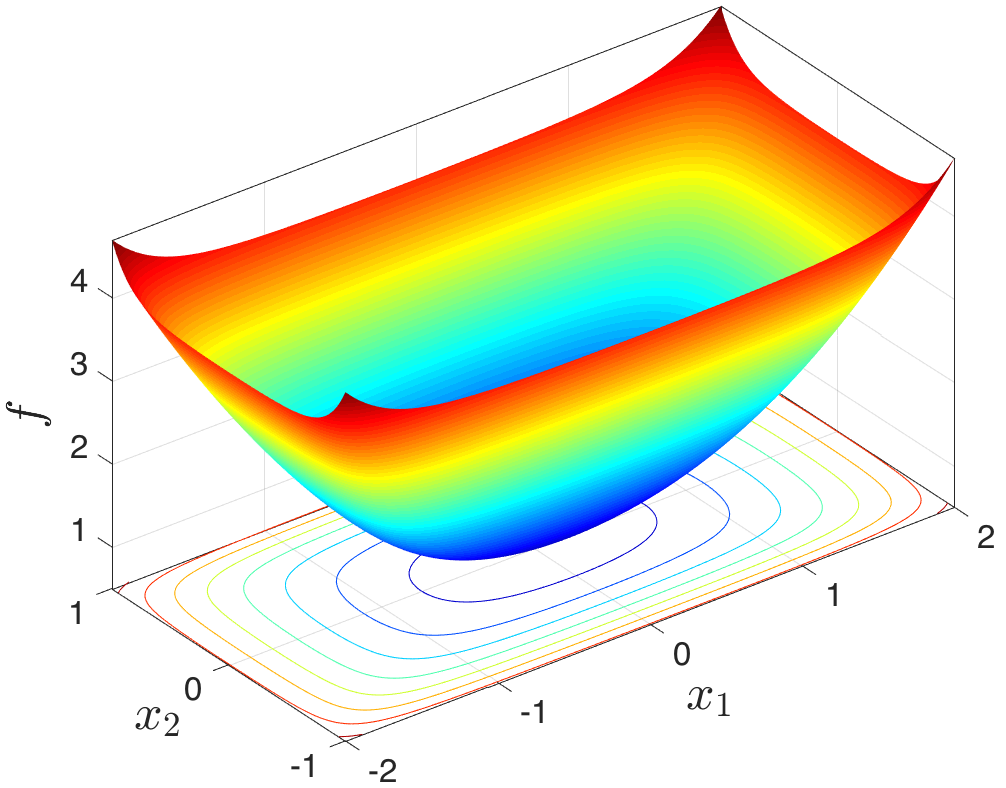}}
\caption{The functions $f$ corresponding to the examples listed in Table~\ref{tab:examples}.}
\label{fig:ex_f}
\end{figure}

Figure~\ref{fig:ex_phasespace} shows the corresponding truncated slow manifolds $\mathcal M_{\epsilon,p}$
for each example.
Here, $\mathcal M_{\epsilon,p}$ denotes the slow manifold $\mathcal M_\epsilon$ truncated to the $p$-th order. 
More specifically, the truncated slow manifold $\mathcal M_{\epsilon,p}$ is a graph over the $\vc x$-plane.
This graph, denoted by $\vc v^{\epsilon,p}:\mathcal X\to\mathbb R^n$, is defined by
\begin{equation}
\vc v^{\epsilon,p}(\vc x) \triangleq \sum_{k=1}^p \epsilon^k \vc g_k(\vc x).
\label{eq:v_trunc}
\end{equation}
where $\vc g_k$'s are given in~\eqref{eq:gk}.
For example 1, we plot the first-order truncation ($p=1$) and, for the other two examples, we plot the third-order truncations
($p=3$). Recall that the even terms in the series vanish so that the $p=3$ truncation only contains two terms.
In all three examples, the difference between the fist and third order truncations is insignificant 
and visually unnoticeable.

In each panel of figure~\ref{fig:ex_phasespace}, two types of trajectories are shown. The blue curves mark the
trajectories of the second-order Euler--Lagrange equation~\eqref{eq:EL_eucl} while the 
red curves mark the trajectories of the first-order reduced equation~\eqref{eq:reducedFlow} plotted on the slow manifold.
\begin{figure}
\centering
\subfigure[Example 1]{\includegraphics[width=.48\textwidth]{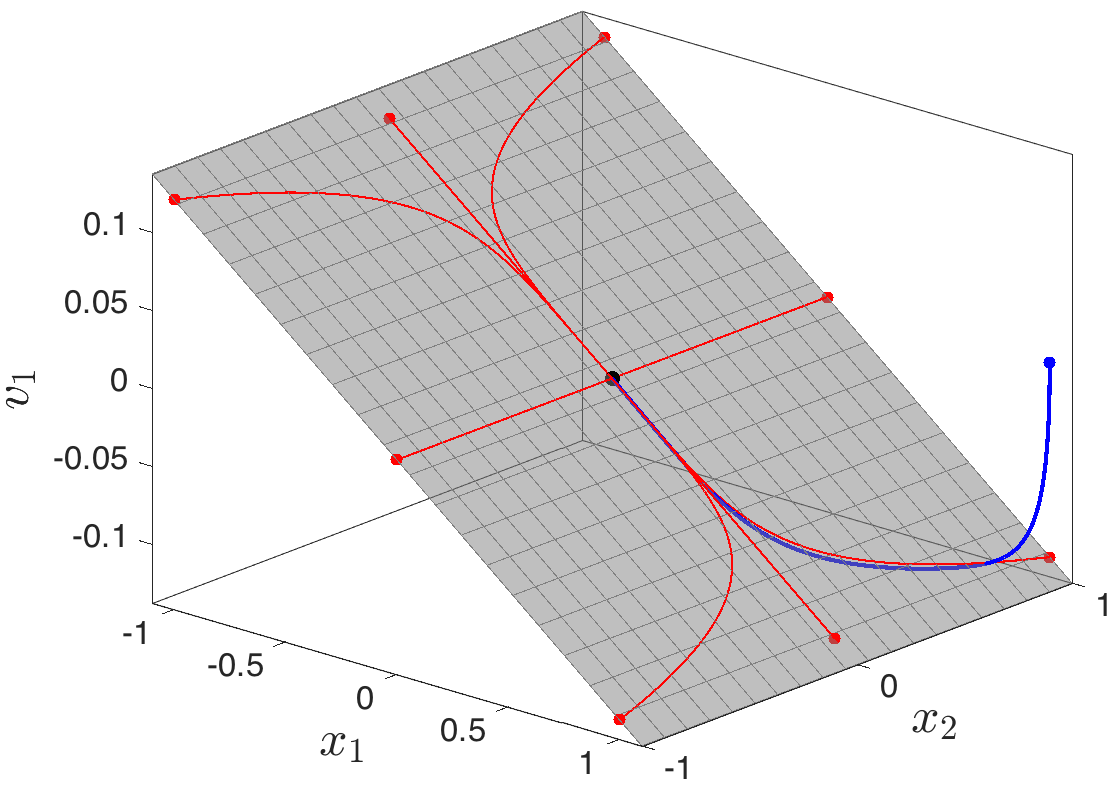}}
\subfigure[Example 2]{\includegraphics[width=.48\textwidth]{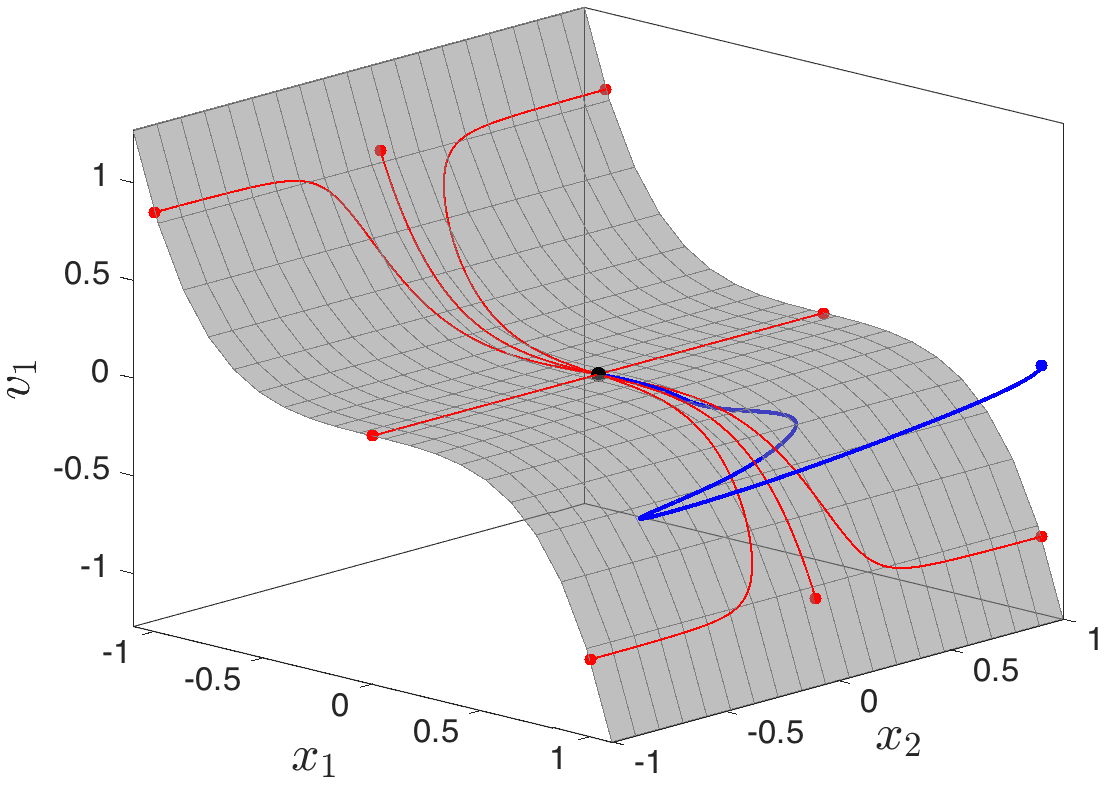}}
\subfigure[Example 3]{\includegraphics[width=.48\textwidth]{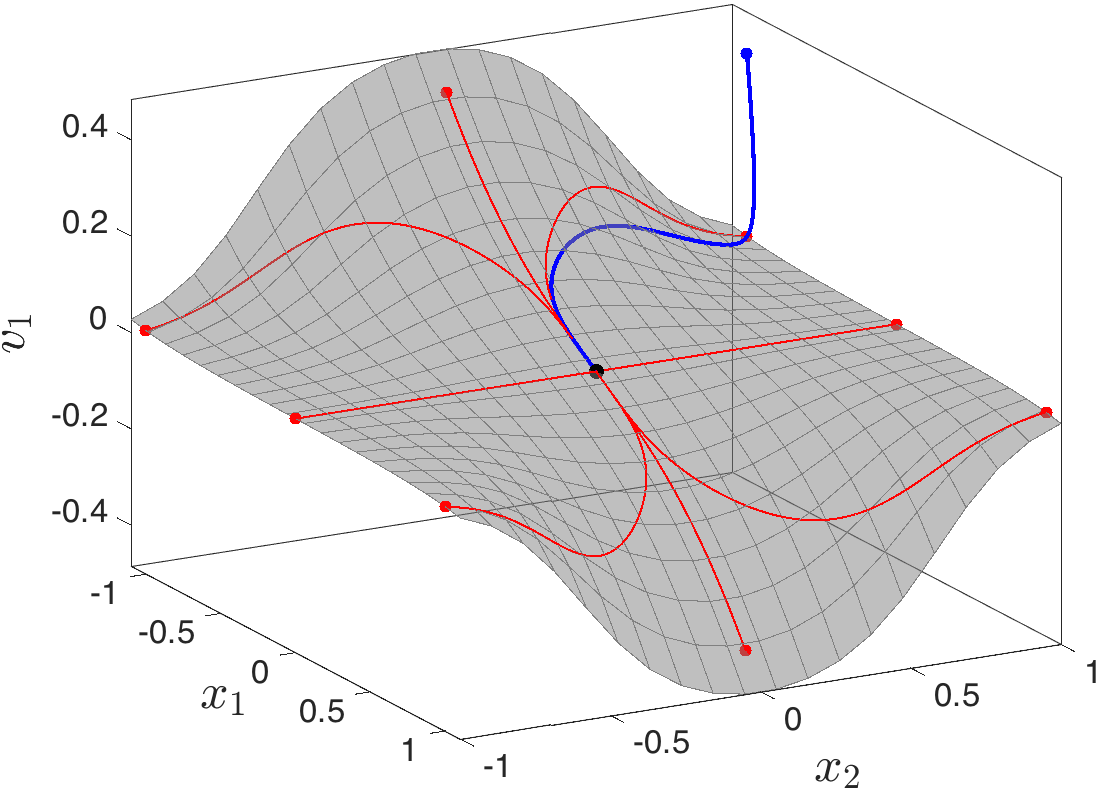}}
\caption{The projection of the slow manifolds $\mathcal M_\epsilon$ (gray surfaces) unto the $(x_1,x_2,v_1)$ subspace.
One trajectory of the Euler--Lagrange equation~\eqref{eq:EL_eucl} (blue) 
and several trajectory of the reduced system~\eqref{eq:reducedFlow} (red) are also shown. 
The initial conditions for each trajectory are marked by blue and red dots respectively. 
The black dot marks the minimum $\vc x^\ast$ (which coincides with the origin).
}
\label{fig:ex_phasespace}
\end{figure}

The trajectories of the Euler--Lagrange equation (blue curves) undergo two stages. First they approach 
the slow manifold exponentially fast. In the second stage, they closely follow the slow manifold towards the 
minimizer $\vc x^\ast$. 
These two stages are demonstrated in figure~\ref{fig:ex_K1K2} which shows the distance from the 
slow manifold $\mathcal M_{\epsilon,p}$ along the trajectory $(\vc x(t),\vc v(t))$ of the Euler--Lagrange equation
corresponding to example 1. This distance is computed as 
\begin{equation}
d(t) = \|\vc v(t) - \vc v^{\epsilon,p}(\vc x(t))\|.
\end{equation}
The quantity $d(t)$ shows two exponential slopes. Within the first time unit (see the inset of figure~\ref{fig:ex_K1K2}),
the distance drops sharply which corresponds to the convergence towards
the slow manifold. The rate of convergence seems identical for both truncations $p=1$ and $p=3$. However, the 
distance corresponding to $p=3$ decreases by a grater amount since
this higher-order truncation more accurately approximates the true slow manifold $\mathcal M_\epsilon$.
Later ($t>10$), the distance $d(t)$ continues to decrease but at a lower rate. 
We attribute this second decaying stage to the asymptotic convergence of the Euler--Lagrange solutions
to the minimizer $\vc x^\ast$ within the slow manifold.
\begin{figure}
\centering
%\subfigure[]{\includegraphics[width=.55\textwidth]{ex3_K1blueK2red}}
\includegraphics[width=.6\textwidth]{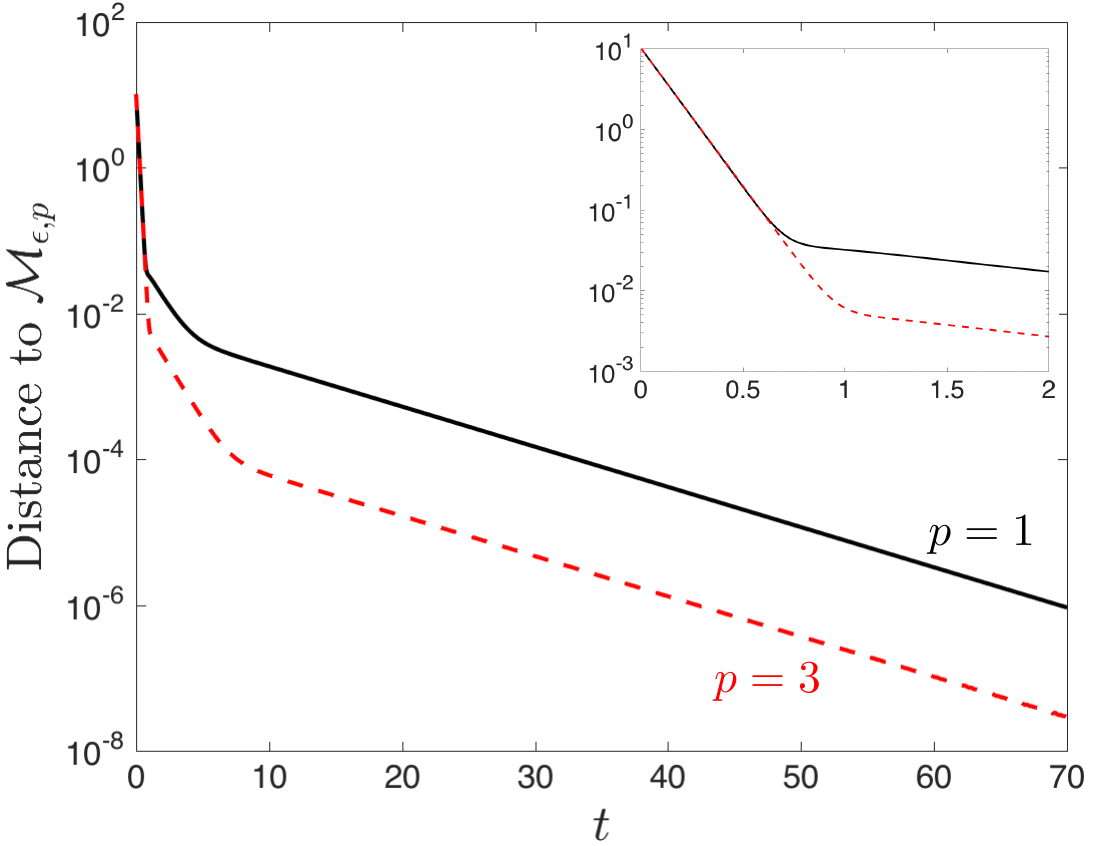}
\caption{
%(a) The slow manifolds $\mathcal M_\epsilon$ truncated at orders $K=1$ (blue) and $K=2$ (red).
The distance from the truncated slow manifold $\mathcal M_{\epsilon,p}$ along the trajectories of the Euler--Lagrange equation~\eqref{eq:EL_eucl} for example 1. 
The inset shows a closeup view.}
\label{fig:ex_K1K2}
\end{figure}

A practical implication of our slow manifold reduction is that one can reduce the computational cost 
of accelerated gradient flow by skipping the first stage of their evolution (i.e. the convergence towards the slow manifold). 
Since the accelerated gradient flows are second-order differential equations, 
they require an initial guess $(\vc x_0,\vc v_0)$ as their initial condition.
Using the slow manifold, it is advantageous to choose the alternative initial velocity
$\tilde{\vc v}_0 = \vc v^{\epsilon,p}(\tilde{\vc x}_0)$ where
\begin{equation}
\tilde{\vc x}_0 = \arg\min_{\vc x\in\mathcal X}\left[ |\vc x-\vc x_0|^2+|\vc v^{\epsilon,p}(\vc x)-\vc v_0|^2\right].
\label{eq:min_ic}
\end{equation}
The point $(\tilde{\vc x}_0,\tilde{\vc v}_0)$ is the closest point on the slow manifold $\mathcal M_{\epsilon,p}$ 
to the initial guess $(\vc x_0,\vc v_0)$.
This alternative initial guess $(\tilde{\vc x}_0,\tilde{\vc v}_0)$ avoids the initial decay phase of the 
flow (i.e. convergence towards the slow manifold) by placing the trajectory $\mathcal O(\epsilon^p)$-close to this manifold at the 
initial time. As a result, several evaluations of the function $f$ and its derivatives during the decay phase are dispensed with. 

\begin{figure}
\centering
\includegraphics[width=.6\textwidth]{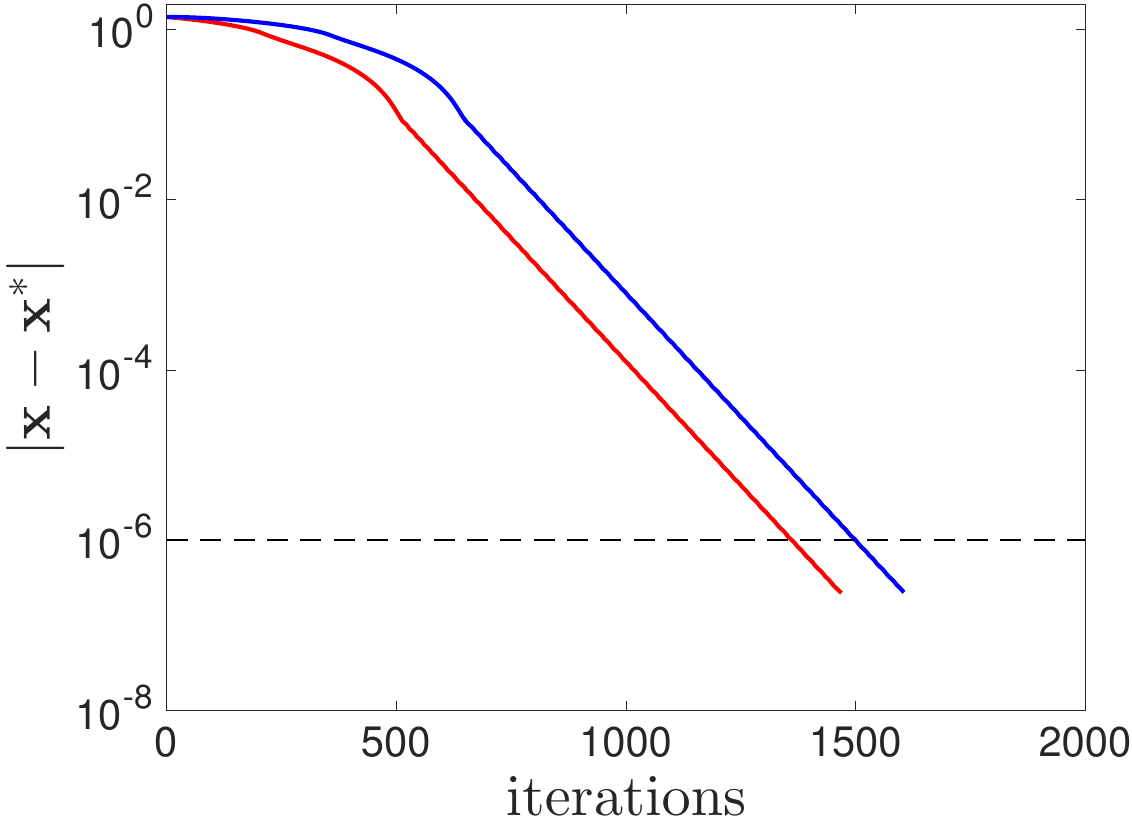}
\caption{Alternative initial conditions. The function in example 1 is minimized using the 
accelerated gradient descent~\eqref{eq:EL_eucl}. The blue curve corresponds to the initial conditions
$\vc x_0 = (1,1)$ and $\vc v_0 = (0,0)$. The red curve corresponds to the alternative initial conditions
$\tilde{\vc x}_0 = \vc x_0 = (1,1)$ and $\tilde{\vc v}_0 = \vc v^{\epsilon,1}(\vc x_0)\simeq (-0.125,-0.625)$.
It takes fewer iterations for the red curve to reach the error tolerance $10^{-6}$ marked by the dashed black curve.
}
\label{fig:cost_saved}
\end{figure}

However, carrying out the minimization~\eqref{eq:min_ic} can be costly itself, outweighing 
the saved computational cost from skipping the decay phase.
Here, we propose a cheaper approach by choosing the alternative initial guess
$\tilde{\vc x}_0=\vc x_0$ and $\tilde{\vc v}_0=\vc v^{\epsilon,p}(\vc x_0)$
for a given $\vc x_0\in\mathcal X$. 
For $p=1$, for instance, this only requires 
one computation of the gradient of $f$ since $\vc v^{\epsilon,1}(\vc x_0) = -\epsilon\eta \nabla f(\vc x_0)$.

Figure~\ref{fig:cost_saved} show an implementation of this alternative initial condition on example 1. 
We solve the accelerated gradient flow~\eqref{eq:EL_eucl} from two different initial conditions. 
First, we set $\vc x_0 = (1,1)$ and $\vc v_0 = (0,0)$. This corresponds to the blue curve 
in figure~\ref{fig:cost_saved}. Then we choose the alternative initial conditions $\tilde{\vc x}_0=\vc x_0=(1,1)$
and $\tilde{\vc v}_0 =\vc v^{\epsilon,1}(\vc x_0)=  -\epsilon\eta \nabla f(\vc x_0)\simeq (-0.125,-0.625)$. This is marked by the 
red curve. The alternative initial guess $(\tilde{\vc x}_0,\tilde{\vc v}_0)$ takes fewer iterations to reach a given error $|\vc x-\vc x^\ast|$.

For instance, it takes $1499$ iterations to reach the error $|\vc x-\vc x^\ast|<10^{-6}$ if we start from the initial 
guess $(\vc x_0,\vc v_0)$. To reach the same error tolerance, it takes $1361$ iterations if we start from the 
alternative initial guess $(\tilde{\vc x}_0,\tilde{\vc v}_0)$. Accounting for the gradient evaluation required to 
compute $\tilde{\vc v}_0$, the alternative initial guess takes $137$ less gradient evaluations
to reach the error tolerance $10^{-6}$. In high dimensions this 
can amount to a noticeable reduction in the computational time. 

\subsection{Non-autonomous case}\label{sec:examples_nonaut}
The numerical results presented in this section correspond to the non-autonomous case
discussed in Section~\ref{sec:slowManRed_nonaut}. The parameter $\rho =\epsilon^{-1}$ is 
reported in the last column of Table~\ref{tab:examples}. For examples 1 and 3, we use the standard value $\rho=3$. To demonstrate that the slow manifold reduction may be valid for even smaller values
of $\rho$ (or equivalently larger values of $\epsilon$), we set $\rho=1.5$ in example 2.

The numerical results are reported in figure~\ref{fig:nonaut}. In all examples, the initial conditions
are $\vc x_0=(1,1)$ and $t_0=0.1$. The black curves mark the trajectories of the full 
Nesterov flow~\eqref{eq:nesterov_cont} with the initial velocity $\vc v_0=\dot{\vc x}(t_0)=(2,0)$.
Note that the phase space $\mathcal X\times\mathbb R^2\times [t_0,t_0+T]$ of the Nesterov flow  
is five-dimensional. Therefore, the bottom panel in figure~\ref{fig:nonaut} shows the projections
of the trajectories on the three-dimensional subspace $\mathcal X\times [t_0,t_0+T]$.
\begin{figure}
\centering
\includegraphics[width=.98\textwidth]{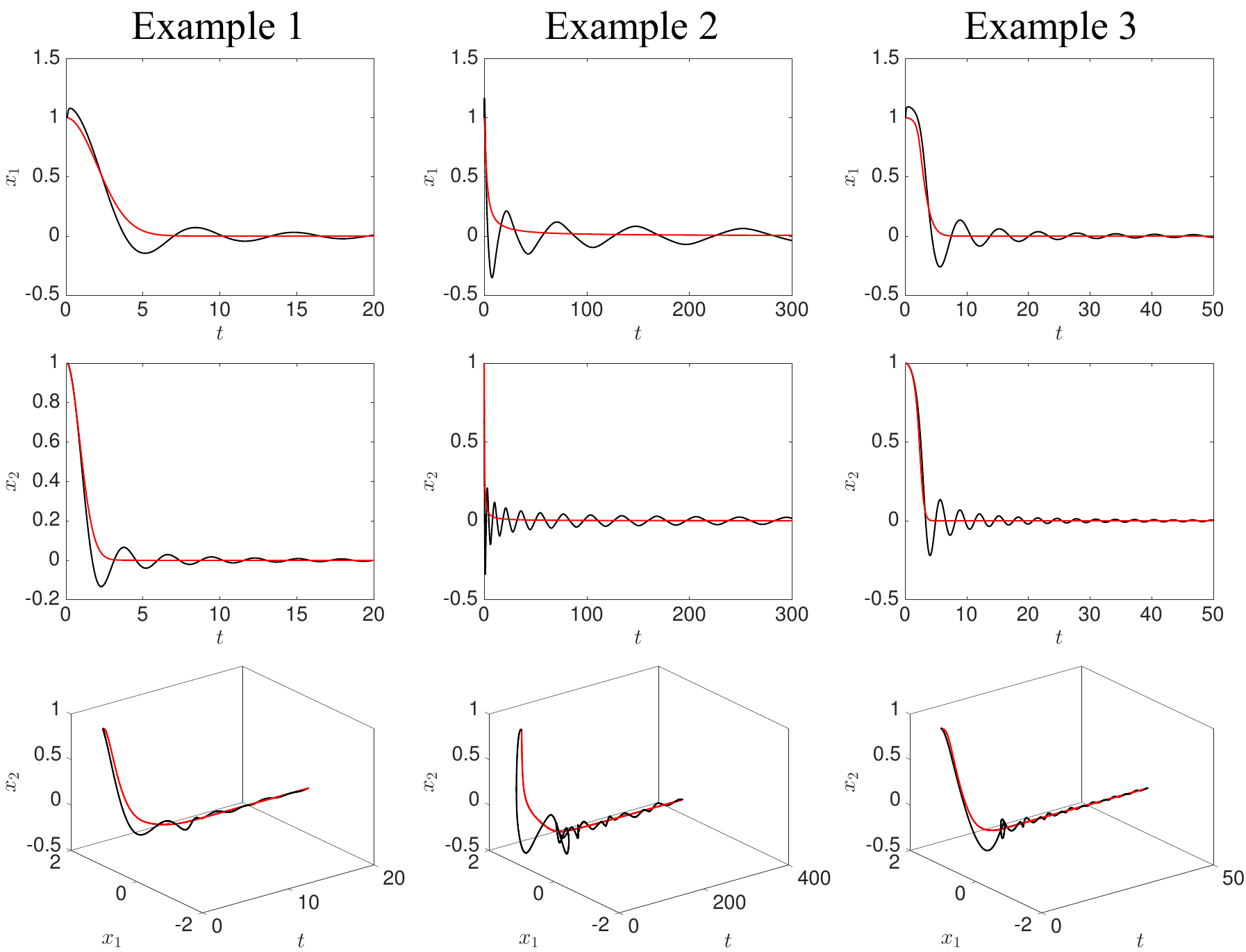}
\caption{Trajectories of the accelerated gradient flows. Example numbers 
refer to the functions listed in Table~\ref{tab:examples}. 
The black curves correspond to the trajectories of the Nesterov flow~\eqref{eq:nesterov_cont}
and the red curves correspond to the reduced-order flow~\eqref{eq:nonAut_reduced_all}.
}
\label{fig:nonaut}
\end{figure}

The red curves mark the trajectories of the slow manifold reduced flow~\eqref{eq:nonAut_reduced_all}.
The trajectories of both systems (the reduced flow and the full Nesterov flow) converge 
to the global minimum $\vc x^\ast =\vc 0$. As expected, the trajectories of the Nesterov flow oscillate around the trajectories of the reduced flow. As time increases, the amplitude of these oscillations
decay and the Nesterov flow trajectories converge onto the slow manifold trajectories.

We have repeated the simulations for various initial conditions $(\vc x_0,\vc v_0)$
and observed very similar behavior (namely, the convergence of the
oscillatory Nesterov trajectories onto the slow manifold trajectories), 
ensuring that the results are not sensitive to the initial conditions.

\section{Conclusions}\label{sec:conc}
It has recently been shown that the continuous-time limit of accelerated gradient descent methods
are the Euler--Lagrange equations corresponding to a single Lagrangian. This Euler--Lagrange equation
takes the form of a second-order ordinary differential equation that we refer to as the accelerated gradient flow.

Here we showed that, under certain assumptions, the accelerated gradient flows
possess an attracting, invariant, slow manifold. The trajectories of accelerated gradient flow
undergo two stages. First, they converge exponentially fast towards the slow manifold. 
Then they closely follow the flow within the slow manifold. We derived a general explicit
formula that approximates the slow manifold to any arbitrary order of accuracy. 

To the leading order, the flow within the slow manifold coincides with the
usual gradient (or steepest) descent. Higher order approximations of the slow manifold, however, involve 
higher order derivatives of the objective function.

We divided our analysis into two parts. In the first part (section~\ref{sec:slowManRed_aut}), 
it is assumed that the accelerated gradient flow is autonomous (i.e., it does not have any explicit dependence on time).
In this setting, the classical geometric singular perturbation theory was applied to prove the
existence of the slow manifold as an $n$-dimensional submanifold of the phase space. 
The second part (section~\ref{sec:slowManRed_nonaut}) concerns the Nesterov accelerated
gradient flow which is a non-autonomous differential equation. 
More recent singular perturbation results are applicable to this flow. 
While the conclusions are similar to the autonomous case, the Nesterov 
flow has an $(n+1)$-dimensional slow manifold in the extended phase space.

A practical implication of our results is the reduced computational cost of 
accelerated gradient flows by initializing it close to the slow manifold
which is now known explicitly. This initialization avoids the initial stage of the 
flow, which involves converging towards the slow manifold, hence reducing the computational cost.

Since accelerated gradient iterations are temporal discretizations of the 
accelerated gradient flow~\cite{wibisono2016}, we expect similar slow manifold reductions to hold for
the discrete accelerated methods. However, a rigorous singular perturbation analysis of these 
iterations is desirable and will be pursued in future work.

%Finally we recall that our results rely on Fenichel's geometric singular perturbation theory~\cite{fenichel1979geometric}. 
%As we discussed in section~\ref{sec:limitations}, certain accelerated gradient flows
%do not satisfy the requirements of this theory. For instance, the 
%continuous-time limit of the Nesterov's accelerated gradient iterations~\eqref{eq:nesterov_cont}
%has an algebraic decay rate. As such, Fenichel's results do not immediately guarantee the 
%existence of an attracting slow manifold for this flow. Therefore, the existence of the
%slow manifold reduction for the Nesterov flow needs to be established by other means. 
%This will also be explored in our future work. 

\section*{Acknowledgments}
I am grateful to Andre Wibisono (Georgia Tech) and Tony Roberts (University of Adelaide) for their valuable comments 
on an earlier draft of this paper.
I also acknowledge fruitful discussions with George Haller (ETH Zurich) on singular perturbation theory
for non-autonomous systems. I am grateful to two anonymous referees whose constructive comments 
significantly improved the paper.

\begin{appendices}
\section{Proof of Theorem~\ref{thm:assymptStable}}\label{app:proof_asymStable}
To prove this theorem, we will need the following lemma. 
\begin{lem}\label{lem:sym}
The matrices $\nabla \vc g_k(\vc x)\in\mathbb R^{n\times n}$ are symmetric for any $\vc x\in\mathcal X$.
\end{lem}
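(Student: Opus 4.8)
The plan is to prove that $\nabla\vc g_k(\vc x)$ is symmetric by induction on $k$, using the recursive definition~\eqref{eq:gk} together with the fact that $\vc g_1=-\eta\nabla f$ is a gradient (hence its Jacobian $-\eta\nabla^2 f$ is symmetric), and the key observation that each $\vc g_k$ is itself a gradient vector field. First I would set up the base case: $\vc g_1(\vc x)=-\eta\nabla f(\vc x)$, so $\nabla\vc g_1(\vc x)=-\eta\nabla^2 f(\vc x)$ is symmetric because $f\in C^2$. The even-index maps $\vc g_{2k}=\vc 0$ are trivially gradients with zero (symmetric) Jacobian, so only the odd indices require work.

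For the inductive step I would prove the stronger statement: \emph{each $\vc g_{2k+1}$ is the gradient of a scalar function $\phi_{2k+1}:\mathcal X\to\mathbb R$}, which immediately implies $\nabla\vc g_{2k+1}=\nabla^2\phi_{2k+1}$ is symmetric. The reason the stronger statement is needed is that symmetry of the individual Jacobians $\nabla\vc g_\ell$ is not obviously preserved under the operation $\vc g\mapsto\nabla\vc g_\ell\,\vc g_{2k-\ell}$ appearing in~\eqref{eq:gk}; one needs the gradient structure. The crucial identity is that for two gradient fields $\vc g_\ell=\nabla\phi_\ell$ and $\vc g_m=\nabla\phi_m$ one has $\nabla\vc g_\ell\,\vc g_m+\nabla\vc g_m\,\vc g_\ell = \nabla\langle\vc g_\ell,\vc g_m\rangle$, since $\nabla\vc g_\ell=\nabla^2\phi_\ell$ is symmetric and hence $(\nabla^2\phi_\ell)\nabla\phi_m=\nabla\langle\nabla\phi_\ell,\nabla\phi_m\rangle-(\nabla^2\phi_m)\nabla\phi_\ell$ by the product rule on $\nabla\langle\nabla\phi_\ell,\nabla\phi_m\rangle$. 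Now rewrite the sum defining $\vc g_{2k+1}$ in~\eqref{eq:gk} by pairing the term $\ell$ with the term $2k-\ell$ (both odd, since even-index $\vc g$'s vanish): the sum $\sum_{\ell=1}^{2k-1}\nabla\vc g_\ell\,\vc g_{2k-\ell}$, restricted to odd $\ell$, pairs $(\ell,2k-\ell)$ and becomes $\tfrac12\sum_{\ell}\bigl(\nabla\vc g_\ell\,\vc g_{2k-\ell}+\nabla\vc g_{2k-\ell}\,\vc g_\ell\bigr)=\tfrac12\sum_\ell\nabla\langle\vc g_\ell,\vc g_{2k-\ell}\rangle=\nabla\bigl(\tfrac12\sum_\ell\langle\vc g_\ell,\vc g_{2k-\ell}\rangle\bigr)$, where each $\vc g_\ell,\vc g_{2k-\ell}$ is a gradient by the inductive hypothesis. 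Hence $\vc g_{2k+1}=-\nabla\phi_{2k+1}$ with $\phi_{2k+1}=\tfrac12\sum_{\ell}\langle\vc g_\ell,\vc g_{2k-\ell}\rangle$, closing the induction.

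The main obstacle, and the step I would be most careful about, is handling the index bookkeeping: one must check that whenever $\ell$ is odd then $2k-\ell$ is also odd (true since $2k$ is even), so that both factors in each pair are among the already-constructed odd-index gradient fields, and that the middle term $\ell=k$ (present when $k$ is odd) is self-paired correctly — but that term is $\nabla\vc g_k\,\vc g_k=\tfrac12\nabla\langle\vc g_k,\vc g_k\rangle$, which is again a gradient, so it causes no trouble. A minor regularity point: $\phi_{2k+1}$ involves derivatives of $f$ up to order consistent with the $C^{r+1}$ hypothesis, so $\vc g_{2k+1}\in C^{1}$ at least and the Hessian $\nabla^2\phi_{2k+1}$ is well-defined and symmetric by Schwarz's theorem. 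Once Lemma~\ref{lem:sym} is established in this form, the symmetry of $\nabla\vc g_k$ follows as an immediate corollary, and in fact the proof yields the sharper fact — useful for the Lyapunov-function construction in the proof of Theorem~\ref{thm:assymptStable} — that the reduced vector field $\sum_k\epsilon^{2k+1}\vc g_{2k+1}$ is itself a gradient field.
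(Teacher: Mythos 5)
Your proposal is correct and follows essentially the same route as the paper: both prove by induction the stronger claim that each $\vc g_k$ is a gradient field, $\vc g_k=\nabla\phi_k$, and both establish the inductive step by pairing the summands $\ell$ and $2k-\ell$ so that $\sum_\ell\nabla\vc g_\ell\,\vc g_{2k-\ell}$ is recognized as the gradient of $\tfrac12\sum_\ell\langle\vc g_\ell,\vc g_{2k-\ell}\rangle$. Your additional remarks on index parity and on Schwarz's theorem are sound but not essential, and the sign convention (you place the minus in front of $\nabla\phi_{2k+1}$ rather than inside $\phi_{2k+1}$) is only a cosmetic difference from the paper.
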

\begin{proof}
To show that $\nabla \vc g_k(\vc x)$ are symmetric, we first show that $\vc g_k=\nabla \phi_k$
for twice continuously differentiable functions $\phi_k:\mathcal X\to\mathbb R$.
And therefore $\nabla \vc g_k = \nabla^2\phi_k$ is symmetric. 
First, note that $\vc g_{2k}=\vc 0$ and therefore this assertion is trivially correct for 
even indices with $\phi_{2k}=0$. For odd indices, we proceed with induction.

Note that $\vc g_1 = - \eta \nabla f$ and therefore we have $\phi_1(\vc x) = -\eta f(\vc x)$.
Now assume that $\vc g_\ell = \nabla \phi_\ell$ for all $1\leq\ell<2k+1$. 
This implies
\begin{align}
\vc g_{2k+1}&=-\sum_{\ell=1}^{2k-1} \nabla^2\phi_\ell \nabla\phi_{2k-\ell}\nonumber\\
&=\nabla\left[-\frac12 \sum_{\ell=1}^{2k-1}\langle \nabla \phi_\ell,\nabla \phi_{2k-\ell}\rangle \right]\nonumber\\
&=\nabla\left[-\frac12 \sum_{\ell=1}^{2k-1}\langle \vc g_\ell,\vc g_{2k-\ell}\rangle \right].
\end{align}
The second line, in the above equation, follows from the series of identities,
\begin{align}
\nabla\left[-\frac12 \sum_{\ell=1}^{2k-1}\langle \nabla \phi_\ell,\nabla \phi_{2k-\ell}\rangle \right]&=
-\frac12 \sum_{\ell=1}^{2k-1}\left[ \nabla^2\phi_\ell \nabla \phi_{2k-\ell} +  \nabla^2\phi_{2k-\ell} \nabla \phi_{\ell}\right]\nonumber\\
&=-\frac12 \sum_{\ell=1}^{2k-1} \nabla^2\phi_\ell \nabla \phi_{2k-\ell}
-\frac12 \sum_{\ell=1}^{2k-1} \nabla^2\phi_{2k-\ell} \nabla \phi_{\ell}\nonumber\\
&=-\frac12 \sum_{\ell=1}^{2k-1} \nabla^2\phi_\ell \nabla \phi_{2k-\ell}
-\frac12 \sum_{j=1}^{2k-1} \nabla^2\phi_{j} \nabla \phi_{2k-j}\nonumber\\
&=- \sum_{\ell=1}^{2k-1} \nabla^2\phi_\ell \nabla \phi_{2k-\ell},
\end{align}
where on the third line we used the change of indices $2k-\ell \mapsto j$.
Therefore, $\vc g_k=\nabla \phi_k$ for all $k\geq 1$, where
the scalar functions $\phi_k:\mathcal X\to\mathbb R$ are defined recursively by
$\phi_1 = -\eta f$, $\phi_{2k}=0$ and
\begin{equation}
\phi_{2k+1}=-\frac12 \sum_{\ell=1}^{2k-1}\langle \nabla \phi_\ell,\nabla \phi_{2k-\ell}\rangle,\quad 
k\geq 1.
\end{equation}
\end{proof}

\begin{proof}[Proof of Theorem~\ref{thm:assymptStable}]
Consider the Lyapunov function
\begin{equation}
\mathcal E(\vc x) = \epsilon\eta \left(f(\vc x)-f(\vc x^\ast)\right) +\sum_{k=1}^{r} \frac{\epsilon^{2k+1}}{2}\sum_{\ell=1}^{2k-1} 
\langle \vc g_\ell(\vc x), \vc g_{2k-\ell}(\vc x)\rangle.
\end{equation}
Note that $\mathcal E(\vc x^\ast)=0$ and that the first term $\epsilon\eta (f(\vc x)-f(\vc x^\ast))$
is positive for all $\vc x\in\mathcal X\backslash\{ \vc x^\ast\}$. The second term (corresponding to $k=1$) is 
$\epsilon^3 \langle \vc g_1(\vc x),\vc g_1(\vc x)\rangle = \epsilon^3\eta^2\|\nabla f(\vc x)\|^2$ which is also
positive for all  $\vc x\in\mathcal X\backslash\{ \vc x^\ast\}$.
It is straightforward to show that the higher-order terms in the series are also positive at least in a neighborhood of
$\vc x^\ast$.

The gradient of the Lyapunov function is given by
\begin{align}
\nabla\mathcal E(\vc x) & = \epsilon\eta\nabla f(\vc x) +
\sum_{k=1}^{r} \frac{\epsilon^{2k+1}}{2} \sum_{\ell=1}^{2k-1}\big[\nabla \vc g_\ell(\vc x)^\top \vc g_{2k-\ell}(\vc x)
+\nabla \vc g_{2k-\ell}(\vc x)^\top \vc g_\ell(\vc x)\big] \nonumber\\
&=\epsilon\eta\nabla f(\vc x) +\sum_{k=1}^{r} \epsilon^{2k+1}\sum_{\ell=1}^{2k-1}\nabla \vc g_\ell(\vc x) \vc g_{2k-\ell}(\vc x)\nonumber\\
&=-\epsilon \vc g_1(\vc x) -\sum_{k=1}^{r} \epsilon^{2k+1}\vc g_{2k+1}(\vc x) = -\sum_{k=0}^r \epsilon^{2k+1}\vc g_{2k+1}(\vc x)
=-\dot{\vc x}.
\end{align}
In these series of identities, we used the fact that $\nabla \vc g_\ell(\vc x)$ are symmetric matrices 
(see Lemma~\ref{lem:sym}) and that
\begin{equation}
\sum_{\ell=1}^{2k-1}\nabla \vc g_{2k-\ell}(\vc x) \vc g_\ell(\vc x) = \sum_{j=1}^{2k-1} \nabla \vc g_{j}(\vc x) \vc g_{2k-j}(\vc x)
\end{equation}

Therefore we have $\frac{\id}{\id t}\mathcal E(\vc x)=\langle \nabla \mathcal E(\vc x),\dot{\vc x}\rangle=-\|\dot{\vc x}\|^2\leq 0$
with the identity attained only at the minimum $\vc x^\ast$. This completes the proof.
\end{proof}
\end{appendices}

%\bibliographystyle{siamplain}
%\bibliography{../../bibliog}

\end{document}